\documentclass[preprint, 5p, twocolumn, 12pt]{elsarticle}
\pdfminorversion=4
\usepackage{amsthm}
\usepackage{amssymb}
\usepackage[active]{srcltx}
\usepackage{amsfonts}
\usepackage{graphics}
\usepackage{eurosym}
\usepackage{pstricks}
\usepackage{shortvrb}
\usepackage{graphicx}
\usepackage{subfigure}
\usepackage{color}
\usepackage{multirow}
\usepackage{float}
\usepackage{ifpdf}
\usepackage{multicol}
\usepackage{epstopdf}
\usepackage{lineno}
\biboptions{sort&compress}
\usepackage{array}
\usepackage{supertabular}
\usepackage{xfrac}
\usepackage{amsmath}
\usepackage{amssymb}
\usepackage{framed}
\usepackage{enumitem}
\usepackage[boxruled,vlined]{algorithm2e}
\SetKwRepeat{Do}{do}{while}
\usepackage{tikz}
\usetikzlibrary{shapes,arrows}

\makeatletter
\newtheoremstyle{break}
   {\topsep}{\topsep}
   {\itshape}{}
   {\bfseries}{}
   {\newline}
   {\thmname{#1}\thmnumber{\@ifnotempty{#1}{ }\@upn{#2}}%
    \thmnote{ {\bfseries(#3)}}}
\makeatother
\theoremstyle{break}

\newtheorem{theorem}{Theorem}%[section]

\newtheorem{definition}{Definition}

\usepackage{url}
\usepackage{color}
\usepackage{soul}
\soulregister\cite7
\soulregister\ref7
\soulregister\eqref7
\usepackage{alphalph}
\usepackage{etoolbox}
\usepackage{tabularx}

\patchcmd{\subequations}{\alph{equation}}{\alphalph{\value{equation}}}{}{}
\journal{Electrical Power Systems Research}
\begin{document}
\begin{frontmatter}

\title{Properties of Convex Optimal Power Flow Model {Based on Power Loss Relaxation} }
\author{Zhao Yuan\corref{cor}}
\cortext[cor]{Corresponding author}
\ead{zhao.yuan@epfl.ch}
\author{Mario~Paolone}
\address{Distributed Electrical Systems Laboratory\\ \'Ecole polytechnique f\'ed\'erale de Lausanne (EPFL), Lausanne, 1015, Switzerland.}

\begin{abstract}
We derive the branch ampacity constraint associated to power losses for the convex optimal power flow (OPF) model based on the branch flow formulation. The branch ampacity constraint derivation is motivated by the physical interpretation of the transmission line $\Pi$-model and practical engineering considerations. We rigorously prove {and} derive: (i) the {loop} constraint of voltage phase angle, required to make the branch flow model valid for meshed power networks, is a relaxation of the original nonconvex alternating current optimal power flow (o-ACOPF) model; (ii) {the} necessary conditions to recover a feasible solution of the o-ACOPF model from the optimal solution of the convex second-order cone ACOPF (SOC-ACOPF) model; (iii) the expression of the global optimal solution of the o-ACOPF model providing that the relaxation of the SOC-ACOPF model is tight; (iv) the (parametric) optimal value function of the o-ACOPF or SOC-ACOPF model is monotonic with regarding to the power loads if the objective function is monotonic with regarding to the nodal power generations; (v) tight solutions of the SOC-ACOPF model always exist when the power {loads are} sufficiently large. Numerical experiments {using benchmark} power networks to validate our {findings and to compare with other convex OPF models,} are given and discussed.
\end{abstract}
\begin{keyword}
Optimal Power flow, ampacity constraint, tight solution, second-order cone programming.
\end{keyword}
\end{frontmatter}

\section*{Nomenclature}
\setlength{\parindent}{0.5em}
\textbf{Sets:}\\
\begin{tabular}{l l}
$\mathcal{N}$ & Nodes (or buses). \\
$\mathcal{L}$ & Lines (or branches). \\
$\mathcal{C}$ & Cycles (or closed loops).\\
\end{tabular}

\textbf{Parameters:} \\
\begin{tabular}{l l}
%$C_{i,t}$ & Available generation capacity of generator i \\
$A^{+}_{nl},A^{-}_{nl}$ & Node to branch incidence matrices.\\
$X_{l},R_{l}$ & Longitudinal reactance and resistance\\
&of branch $l$ modelled as a $\Pi$-model.\\
$G_{n},B_{n}$ & Shunt conductance and susceptance\\
&of node $n$.\\
$B_{s_{l}},B_{r_{l}}$ & Sending- and receiving- end shunt\\
&susceptance of branch $l$.\\
\end{tabular}
\begin{tabular}{l l}
$\widetilde{K}_l, K_l$ & Actual and approximated ampacity\\
&of branch $l$.\\
$p^{min}_{n}, p^{max}_{n}$ & Lower and upper bounds of $p_{n}$.\\
$q^{min}_{n}, q^{max}_{n}$ & Lower and upper bounds of $q_{n}$.\\
$\theta^{min}_{l}, \theta^{max}_{l}$ & Lower and upper bound of $\theta_{l}$.\\
$p_{d_{n}},q_{d_{n}}$ & Active and reactive power loads \\
&of node $n$. \\
$\alpha_{n}, \beta_{n}, \gamma_{n}$ & Cost parameters of active power\\
&generation.\\
\end{tabular}
\textbf{Variables:} \\
\begin{tabular}{l l}
$p_{n},q_{n}$ & Active and reactive power injections \\
&at node $n$. \\
$p_{s_{l}},q_{s_{l}}$ & Non-measurable sending-end active\\
&and reactive power flows of branch $l$. \\
$\widetilde{p}_{s_{l}},\widetilde{q}_{s_{l}}$ & Measurable sending-end active\\
&and reactive power flows of branch $l$. \\
\end{tabular}
\begin{tabular}{l l}
$p_{r_{l}},q_{r_{l}}$ & Non-measurable receiving-end active\\
&and reactive power flows of branch $l$. \\
$\widetilde{p}_{r_{l}},\widetilde{q}_{r_{l}}$ & Measurable receiving-end active and \\
&reactive power flows of branch $l$. \\
$q_{cs_{l}}, q_{cr_{l}}$ & Sending- and receiving- end shunt \\
&reactive power of branch $l$.\\
$i_{s_{l}},i_{r_{l}}$ & Non-measurable sending- and \\
&receiving- end current of branch $l$.\\
$\widetilde{i}_{s_{l}},\widetilde{i}_{r_{l}}$ & Measurable sending- and receiving-\\
&end current of branch $l$.\\
$p_{o_{l}}, q_{o_{l}}$ & Active and reactive power losses of \\
&branch $l$.\\
$v_{n},V_{n}$ & Phase-to-ground voltage magnitude \\
&and voltage square at node $n$.\\
$v_{s_{l}}, v_{r_{l}}$ & Sending- and receiving- end phase-to\\ 
&-ground voltage of branch $l$.\\
$V_{s_{l}}, V_{r_{l}}$ & Sending- and receiving- end phase-to\\ 
&-ground voltage square of branch $l$.\\
$\theta_{n}$& Phase-to-ground voltage phase angle \\
&at node $n$.\\
$\theta_{l}$& Phase angle difference between \\
&the sending- and receiving- end \\
&phase-to-ground voltages of branch $l$.\\
$\theta_{s_{l}}, \theta_{r_{l}}$& Phase angles of sending- and \\ 
&receiving- end phase-to-ground\\
&voltages of branch $l$.\\
$K_{o_{_l}}$ & Equivalent ampacity constraint for \\ 
&power losses of branch $l$.\\
\end{tabular}

\textbf{Indicators:} \\
\begin{tabular}{l l}
$Gap^{po}_{l},Gap^{qo}_{l}$ &Relaxation gaps of active\\ 
&power loss and reactive \\
&power loss.\\
$Gap^{po, max},Gap^{qo, max}$ &Maximum relaxation gaps\\
&of $Gap^{po}_{l},,Gap^{qo}_{l}$.\\
\end{tabular}
\section{Introduction}
Optimal power flow (OPF) is a fundamental mathematical optimization model for decision making in power system operation and planning \cite{carpentier1962contribution}. Improving the solution quality of OPF can give large economic and engineering benefits to the power industry \cite{Cain2012, advan_opt_meth_ps}. Recent literature focusing on the convexification of the OPF model suffered from {the} inexactness due to the relaxation of several constraints \cite{Farivar2012_full,Gan2015,Kocuk2015,Bai2008,Lavaei2012,moment_relax_opf,sparcity_moment,qc_relax_minlp,Coffrin2015,stren_qc_bdt_pno, ttn_mcrelax_to_glob}.

The branch flow formulation of the power flow equations for radial power networks has been originally derived by Baran and Wu in \cite{Capa_place_barawu} to optimize the placement of capacitor in radial distribution networks. In \cite{Jabr2006}, Jabr derives a conic programming approach to solve load flow in radial distribution network. In \cite{Farivar2012_full}, Farivar and Low propose the branch flow model as a relaxed OPF model by relying on second-order cone programming (SOCP) and prove that the relaxation is tight for radial networks under the assumption that the upper bound of power generation is infinite. In \cite{Gan2015}, Gan, Li et al. prove that the optimal solution of the branch flow model is exact (tight) if the voltage upper bounds are not binding and the network parameters satisfy some mild conditions which can be checked \textit{a priori}. Christakou, Tomozei et al. show in \cite{Christakou2015} that the branch flow model is not exact due to the approximation of the {branch} ampacity constraint. Nick et al. propose an exact convex OPF model for radial distribution networks in \cite{Exact_OPF_rad} by considering the shunt parameters associated to the exact modelling of the lines or other branch elements. Sufficient conditions regarding the network parameters under which the convex OPF model in \cite{Exact_OPF_rad} can give exact solutions to the original ACOPF are provided and rigorously proved. In \cite{Kocuk2015}, Kocuk, Dey et al.  propose three methods (arctangent envelopes, dynamic linear inequalities and separation over cycle constraints) to strengthen or tighten {the} SOCP relaxation of OPF. Numerical results show better solution quality of SOCP-based model over SDP-based model \cite{Kocuk2015}.

Semidefinite programming (SDP) relaxation of OPF has been firstly proposed by Bai et al. in \cite{Bai2008}. The proposed procedures derive the rectangular form OPF in a quadratic programming model and replace the variable-vector ($\textbf{x}$) by the variable-matrix ($X=\textbf{x}^{T}\textbf{x}$). Solutions of OPF can be recovered from the square roots of the diagonal elements in the variable-matrix \cite{Bai2008}. SDP advantages in avoiding the derivation of Jacobian and Hessian matrices if the interior point method (IPM) is used \cite{Bai2008}. Lavaei and Low propose to solve the SDP relaxation of the dual OPF problem \cite{Lavaei2012}. They prove that sufficient and necessary condition of zero duality gap holds for several IEEE test cases (14, 30, 57, 118) at {the} base power load {level}.\footnote{As a comparison, in this paper, we evaluate the OPF solutions of test cases at low power load levels (for which the relaxation gaps are prominent).} But the branch ampacity constraint is not fully addressed. Especially, as it is proved in \cite{Christakou2015, Exact_OPF_rad}, the physical interpretation of the transmission line $\Pi$ model is not correct in \cite{Lavaei2012}. In \cite{Convex_Rel_mesh}, Madani, Sojoudi et al. show that SDP relaxation works well if the line capacity is expressed by nodal voltage variables. A penalization method is proposed in \cite{Convex_Rel_mesh} to reduce the rank of the SDP solution. Recenly, Eltved, Dahl et al. numerically investigate the computational efficiency of SDP by using test cases up to 82,000 nodes which is solved in around 7 hours using a high performance computing node \cite{SDP_rob_sca}.

Based on polynomial optimization theory, Molzahn and Hiskens propose the moment-based relaxations of OPF in \cite{moment_relax_opf}. This formulation firstly defines order-$\gamma$ moment relaxation $x_{\gamma}$ of all monomials $\hat{x}^{\alpha}$ of voltage real-and-imaginary components $\hat{x}$. Then, all the monomials $\hat{x}^{\alpha}$ up to order $2\gamma$ constitute the symmetric moment matrix $M_{\gamma}$ which is used to re-formulate the OPF constraints via the SDP. Global optimal solutions are found, at the cost of heavy computational burden due to higher relaxation order $\gamma$, for the test cases in \cite{moment_relax_opf}. The same authors in \cite{sparcity_moment} improve the computational efficiency of this method by exploiting power system sparsity and applying high relaxation order to specific buses.
In \cite{qc_relax_minlp}, Hijazi, Coffrin et al. propose a quadratic convex (QC) relaxation by replacing the nonconvex voltage-amplitude-and-phase-angle associated constraints with the corresponding convex envelopes. In \cite{Coffrin2015}, the same authors investigate the relationships between different convex OPF formulations including QC, SDP and SOCP. Reducing the optimality gap of QC or SOCP based OPF models by bound tightening techniques can be found in \cite{stren_qc_bdt_pno, ttn_mcrelax_to_glob}.
Recently, Shchetinin et al. propose in \cite{eff_bound_tight_relax} three methods which require solving optimization problems to tighten the upper bounds of the voltage phase angle difference in order to satisfy the branch ampacity constraint. The same authors in \cite{construction_linear_app} propose an iterative algorithm to construct a number of linear constraints (based on inner or outer approximation) to approximate the branch ampacity constraint. 

In this paper, we focus on the formal derivation of equivalent ampacity constraint for the branch power losses.
More specifically, we consider to improve the convex OPF model in {radial and} meshed power networks which necessitates the additional voltage phase angle constraint. Instead of using the approach in \cite{Exact_OPF_rad} to reformulate the branch flow model, we keep using the same set of variables (in the form of power flow variables) of the branch flow model but equivalently derive the ampacity constraint for the power losses. In this way, we overcome the approximation gap due to neglecting of the shunt elements of the branches. We then propose and prove six theorems showing important properties of the proposed SOC-ACOPF model. In this regard, the main contributions of this paper are listed below. 
%(i) derivation of the equivalent branch ampacity constraint for the power losses; (ii) proving that the SOC-ACOPF model (with additional constraint to improve its feasibility according to the original ACOPF constraints) is a relaxed ACOPF model; (iii) deriving a feasible solution recovery procedure when the SOC relaxation is tight; (iv) demonstrating that the (parametric) optimal value functions of the SOC-ACOPF model and o-ACOPF model are monotonic with regarding to nodal load absorptions when the objective function is monotonic with regarding to nodal power injections; (v) explaining that larger load absorptions can tighten the relaxation in the SOC-ACOPF model. 
\begin{itemize}
\item Derivation of the transmission line or branch ampacity constraint for the power losses.
\item Proving that the SOC-ACOPF model (with additional constraint to improve its feasibility) is a relaxed ACOPF model. 
\item Deriving a feasible solution recovery procedure when the SOC relaxation is tight.
\item Demonstrating that the (parametric) optimal value functions of the SOC-ACOPF model and o-ACOPF model are monotonic with regarding to the power loads when the objective function is monotonic with regarding to the power generations.
\item Proving that larger power loads can tighten the relaxation in the SOC-ACOPF model.   
\item {Numerical Comparison of our o-ACOPF model and SOC-ACOPF model with respect to the other two convex OPF models proposed in the recent literature.}
\end{itemize}     
The rest of this paper is organized as follows. Section II formulates the o-ACOPF model and SOC-ACOPF model for {radial and} meshed power networks. Section III derives the equivalent branch ampacity constraint for power losses. Section IV proposes and proves important properties of the SOC-ACOPF model. Section V gives the numerical validations of our analytical proofs and discussions. {A numerical comparison with other convex OPF models is also conducted.} Section VI concludes.
\section{Optimal Power Flow Model}
\subsection{Power based Branch Flow Model}
We assume that the three-phase power grid satisfies two conditions: (i) all the branches and shunt impedances are circularly symmetric; (ii) all the triplets of nodal voltages and branch currents are symmetrical and balanced. These two conditions validate the use of single-line equivalent model of the three-phase power grid. The full ACOPF model is based on the validated branch flow model presented in \cite{DLMP_me}. We denote this original ACOPF model as o-ACOPF model-\eqref{NOPF}. The variable convention makes reference to the branch $\Pi$-model in Fig. \ref{fig:trans_line_pi}.  $X_{l},R_{l}$ are the longitudinal reactance and resistance of branch $l$. $B_{s_{l}},B_{r_{l}}$ are the sending- and receiving- end shunt susceptance of branch $l$. $v_{n}$ is the phase-to-ground voltage magnitude at node $n$.  $p_{s_{l}},q_{s_{l}}$ are the non-measurable sending-end active and reactive power flows {of} branch $l$. $\widetilde{p}_{s_{l}},\widetilde{q}_{s_{l}}$ are the measurable sending-end active and reactive power flows {of} branch $l$. $p_{r_{l}},q_{r_{l}}$ are the non-measurable receiving-end active and reactive power flows of branch $l$. $\widetilde{p}_{r_{l}},\widetilde{q}_{r_{l}}$ are the measurable receiving-end active and reactive power flows of branch $l$. $q_{cs_{l}}, q_{cr_{l}}$ are the sending- and receiving- end shunt reactive power of branch $l$. The subscripts $s,r,d,o,c$ in $p_{._{l}}, q_{._{l}}, v_{._{l}}, V_{._{l}}, p_{._{n}}, q_{._{n}}$ are not indices but to imply the meaning of sending-end of branch $l$, receiving-end of branch $l$, power load, power loss and shunt capacitance correspondingly. 
\begin{figure}[!htbp]
\vspace{-0.5cm}
%\hspace*{-0.6cm}
\centering
\includegraphics[width=9.5cm]{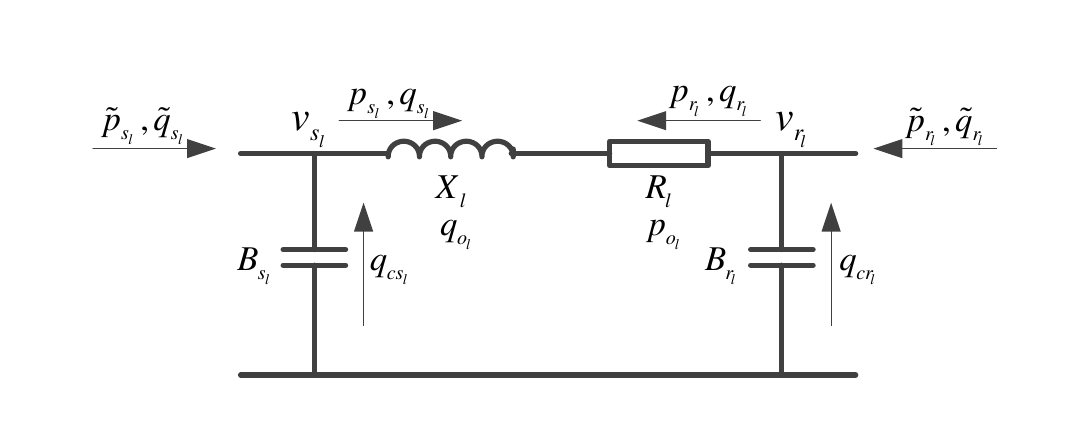}
\vspace{-0.9cm}
\caption{The branch $\Pi$-model and associated variables}\label{fig:trans_line_pi}
\end{figure}

Note $(\widetilde{p}_{s(r)_{l}},\widetilde{q}_{s(r)_{l}})$ are the actual (i.e. measurable) branch power flows. The variables $(p_{s(r)_{l}},q_{s(r)_{l}})$ are the non-measurable branch power flows used in our OPF model. For the difference between $(\widetilde{p}_{s(r)_{l}},\widetilde{q}_{s(r)_{l}})$ and $(p_{s(r)_{l}},q_{s(r)_{l}})$, or between $\widetilde{i}_{s(r)_{l}}$ and $i_{s(r)_{l}}$, please refer to references \cite{Christakou2015} and \cite{Exact_OPF_rad}. {We want to emphasize here that the physical interpretation of the branch $\Pi$-model is of importance since the branch phase-to-ground capacitance is actually distributed along the branch which means the current (or power) flowing through the branch is actually not the same along the branch due to the charging current (or power) flowing from the distributed phase-to-ground capacitance to the branch. Although the distributed branch phase-to-ground capacitance is represented in the $\Pi$-model in a lumped way at the two-ends of the branch, we should bear in mind that the current, or the power, along the branch is not the same when we consider the branch ampacity constraint. This means that the actual power flow variables $p_{s_{l}}, q_{s{l}}$ are non-measurable and difficult to be constrained. This is the main reason we only rely on the measurable power flow variables $\widetilde{p}_{s_{l}}, \widetilde{q}_{s_{l}}$ to constrain the branch ampacity and then derive the relationship between $(\widetilde{p}_{s(r)_{l}},\widetilde{q}_{s(r)_{l}})$ and $(p_{s(r)_{l}},q_{s(r)_{l}})$ later in this paper.}
\begin{subequations}\label{NOPF}
\begin{align}
    &\underset{\Omega_{ACOPF}}{\text{Minimize}} {\hspace{10pt}}  f(\Omega_{ACOPF}) \label{eq:objective_OPF}\\
    &\text{subject to} \nonumber \\
    &p_{n}-p_{d_{n}}=\sum_{l}({A^{+}_{nl}}p_{s_{l}}-{A^{-}_{nl}}p_{o_{l}})+G_{n}V_{n},\; \forall n\in \mathcal{N} \label{eq:p_balance_chap1}\\
    &q_{n}-q_{d_{n}}=\sum_{l}({A^{+}_{nl}}q_{s_{l}}-{A^{-}_{nl}}q_{o_{l}})-B_{n}V_{n},\; \forall n\in \mathcal{N}
\label{eq:q_balance_chap1}\\
 &V_{s_{l}}-V_{r_{l}}=2R_{l}p_{s_{l}}+2X_{l}q_{s_{l}}-R_{l}p_{o_{l}}-X_{l}q_{o_{l}},\; \forall l\in \mathcal{L}\label{eq:VsVr_chap2}
\end{align}
\begin{align}
&v_{s_{l}}v_{r_{l}}\sin\theta_{l}=X_{l}p_{s_{l}}-R_{l}q_{s_{l}},\, \forall l\in \mathcal{L}\label{eq:vsvrsin}\\
&\widetilde{i}^{2}_{s(r)_{l}}=\frac{\widetilde{p}^{2}_{s(r)_{l}}+\widetilde{q}^{2}_{s(r)_{l}}}{V_{s(r)_{l}}}\leq \widetilde{K}_{l},\; \forall l\in \mathcal{L}\label{eq:capacity}\\
&V_{n}=v^2_{n},\; \forall n\in \mathcal{N}\label{eq:veq}\\
&\theta_{l}=\theta_{s_{l}}-\theta_{r_{l}},\; \forall l\in \mathcal{L} \label{eq:def_thetal}\\
&{p}_{o_{l}} = \frac{ p^{2}_{s_{l}}+q^{2}_{s_{l}}}{V_{s_{l}}} R_{l},\; \forall l\in \mathcal{L}\label{eq:ploss}\\
&{q}_{o_{l}} = \frac{ p^{2}_{s_{l}}+q^{2}_{s_{l}}}{V_{s_{l}}} X_{l},\; \forall l\in \mathcal{L}\label{eq:qloss}\\
&v_{n}\in (v^{min}_{n},\, v^{max}_{n}),\; \forall n\in \mathcal{N} \label{eq:vbound}\\
&\theta_{l}\in (\theta^{min}_{l},\, \theta^{max}_{l}),\; \forall l\in \mathcal{L}\label{eq:thetal_bound}\\
&\theta_{n}\in (\theta^{min}_{n},\, \theta^{max}_{n}),\; \forall n\in \mathcal{N} \label{eq:thetan_bound}\\
&p_{n}\in (p^{min}_{n},\, p^{max}_{n}),\; \forall n\in \mathcal{N}\label{eq:pbound}\\
&q_{n}\in (q^{min}_{n}, q^{max}_{n}),\; \forall n\in \mathcal{N}\label{eq:qbound}
\end{align}

Where $\Omega_{ACOPF}=\{p_{n},q_{n},p_{s_{l}},q_{s_{l}},p_{o_{l}},q_{o_{l}},V_{n},v_{n},$ $\theta_{l},\theta_{n}\} \in \Re^{m}$ is the set of model variables. $m=5\left | \mathcal{N}  \right |+5\left | \mathcal{L}  \right |$ is the dimension of $\Omega$. $\left | \mathcal{N}  \right |$ is the cardinality of the node set $\mathcal{N}$.  $\left | \mathcal{L}  \right |$ is the cardinality of the branch set $\mathcal{L}$. $n \in \mathcal{N}$ is the index of nodes. $l \in \mathcal{L}$ is the index of branches. Based on the applications, the objective function $f(\Omega_{ACOPF})$ can be the economic cost of energy production or network power losses etc. In this paper, we assume the objective function is convex. Equations (\ref{eq:p_balance_chap1}) and (\ref{eq:q_balance_chap1}) represent the active and reactive power balance. $G_{n},B_{n}$ are the shunt conductance and susceptance of node $n$. {${A^{+}_{nl}}$ and ${A^{-}_{nl}}$} are the node-to-branch incidence matrices of the network with ${A^{+}_{nl}}=1,\, {A^{-}_{nl}}=0$ if $n$ is the sending-end of branch $l$, and ${A^{+}_{nl}}=-1,\, {A^{-}_{nl}}=-1$ if $n$ is the receiving-end of branch $l$. $p_{n},q_{n}$ are the active and reactive power generations at node $n$. $p_{d_{n}},q_{d_{n}}$ are the active and reactive power loads at node $n$. $V_{n}$ is the phase-to-ground voltage magnitude square at node $n$. $p_{o_{l}}, q_{o_{l}}$ are the active and reactive power losses of branch $l$. Constraints \eqref{eq:p_balance_chap1}-\eqref{eq:vsvrsin} make the o-ACOPF model valid for both radial and meshed power networks. Equations (\ref{eq:VsVr_chap2})-(\ref{eq:vsvrsin}) are derived by taking the magnitude and phase angle of {the} voltage drop phasor along branch $l$ respectively. $v_{s_{l}}, v_{r_{l}}$ are the sending- and receiving- end phase-to-ground voltages of branch $l$. $V_{s(r)_{l}}=v^{2}_{s(r)_{l}}$ are voltage magnitude squares. $\theta_{l}=\theta_{s_{l}}-\theta_{r_{l}}$ is the phase angle difference between the sending- and receiving- end voltages of branch $l$. $\theta_{s_{l}}, \theta_{r_{l}}$ are the phase angles of sending- and receiving- end phase-to-ground voltages of branch $l$. To guarantee this derivation is valid, we assume $(\theta^{min}_l,\theta^{max}_l) \subseteq (-\frac{\pi}{2}, \frac{\pi}{2})$. $i_{s_{l}},i_{r_{l}}$ are the non-measurable sending- and receiving- end currents of branch $l$. $\widetilde{i}_{s_{l}},\widetilde{i}_{r_{l}}$ are the measurable sending- and receiving- end currents of branch $l$. $\widetilde{K}_l$ is the actual ampacity of branch $l$ which is provided {by} the branch manufacturer. Equations (\ref{eq:ploss})-(\ref{eq:qloss}) represent active power and reactive power losses. Constraints (\ref{eq:vbound})-(\ref{eq:qbound}) are bounds for voltage magnitude, voltage phase angle difference, nodal active power generation and nodal reactive power generation. $p^{min}_{n}, p^{max}_{n}$ are the lower and upper bounds of $p_{n}$. $q^{min}_{n}, q^{max}_{n}$ are the lower and upper bounds of $q_{n}$. $\theta^{min}_{l}, \theta^{max}_{l}$ are the lower and upper bounds of $\theta_{l}$. This model is nonconvex because of the nonconvex constraints \eqref{eq:vsvrsin}, \eqref{eq:veq}, \eqref{eq:ploss} and \eqref{eq:qloss}. It is hard for current available nonlinear programming solvers to efficiently find the global optimal solution of this nonconvex optimization model. 

It is worth to mention that including $\theta_n$ as one of the model variables and using \eqref{eq:def_thetal} to define $\theta_l$ make our model implicitly satisfy the cyclic constraint (Kirchoff's voltage law for AC circuit) of meshed power networks:
\begin{align}
\sum_{l\in \mathcal{C}} \theta_l=\sum_{l\in \mathcal{C}} (\theta_{s_l}-\theta_{r_l})=0\; mod\; 2\pi \label{eq:cons_cyclic}
\end{align}
\end{subequations}
\begin{figure}[!htbp]
\vspace{-0.8cm}
%\hspace*{-0.6cm}
\centering
\includegraphics[width=8.5cm]{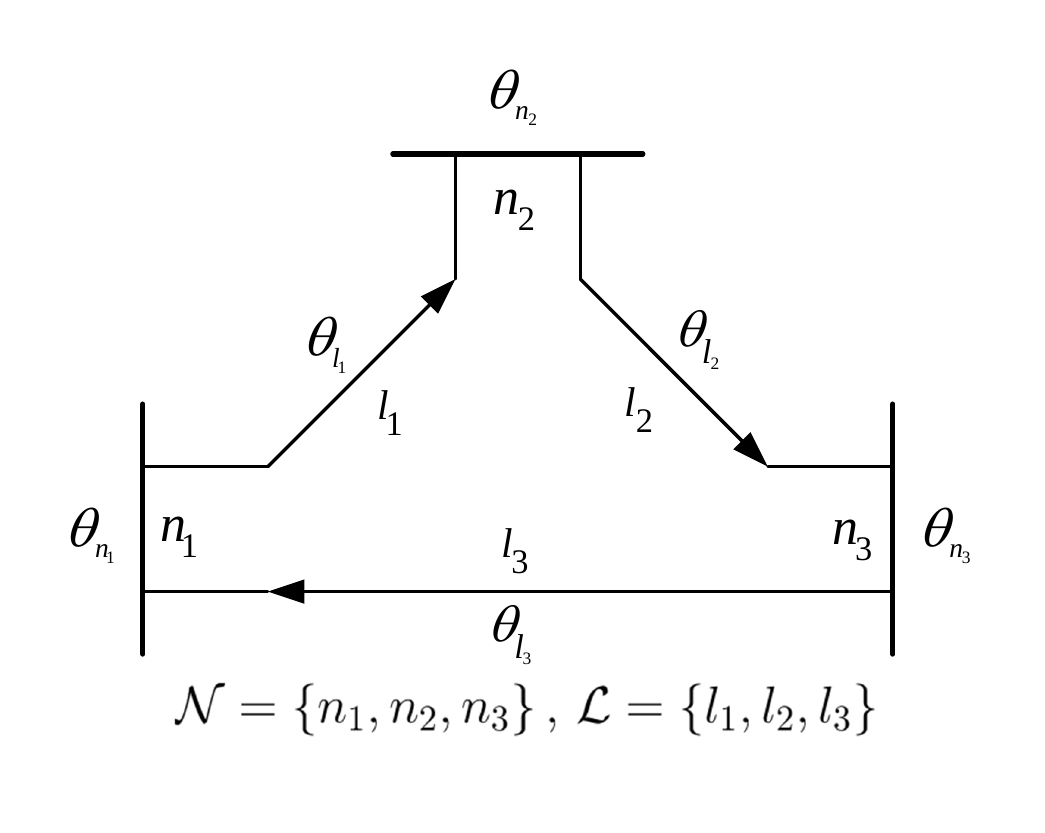}
\vspace{-0.9cm}
\caption{The implicit expression of cyclic constraint}\label{fig:loop_zero}
\end{figure}

Where $\mathcal{C}$ is the set of all cycles or closed loops in the meshed power networks. This means we do not need to include the cyclic constraint \eqref{eq:cons_cyclic} explicitly in our model. This approach is used and validated by our work in \cite{DLMP_me, SOCACOPF_relax_fea_me}. An illustrative example is provided in Fig. \ref{fig:loop_zero}. In this example, since $\theta_{l_1}=\theta_{n_1}-\theta_{n_2}$, $\theta_{l_2}=\theta_{n_2}-\theta_{n_3}$ and $\theta_{l_3}=\theta_{n_3}-\theta_{n_1}$, the cyclic constraint $\theta_{l_1}+\theta_{l_2}+\theta_{l_3}=0\;mod\;2\pi$ is implicitly satisfied. This reasoning holds for cycles or closed loops with any number of nodes and branches.

\subsection{Second-Order Cone Relaxation}
The SOC-ACOPF model is derived using branch sending-end power flows and voltage phase angle variables.  Note that in the derived model, voltage magnitude square variables are {used} (voltage magnitude can be recovered from the model by taking the square root of the solutions of the voltage magnitude square variables). {Our} SOC-ACOPF is formulated in optimization model-(\ref{SOPF-P}). The numerical performance of this model compared with other SOCP based ACOPF models can be found at \cite{SOCACOPF_relax_fea_me}. 
%\vspace{-1cm}
%\renewcommand\thesection{\arabic{section}}
\begin{subequations}\label{SOPF-P}
\begin{align}
    &\underset{\Omega_{SOC-ACOPF}}{\text{Minimize}} {\hspace{10pt}}  f(\Omega_{SOC-ACOPF})\label{eq:objective}\\
    &\text{subject to} \;\;\; \eqref{eq:p_balance_chap1}-\eqref{eq:VsVr_chap2}, \eqref{eq:def_thetal},\eqref{eq:vbound}-\eqref{eq:qbound}\nonumber\\
    &K_{o_{l}}\geq{q}_{o_{l}} \geq \frac{ p^{2}_{s(r)_{l}}+q^{2}_{s(r)_{l}}}{V_{s(r)_{l}}} X_{l},\; \forall l\in \mathcal{L}\label{eq:plosscone}\\
    &p_{o_{l}}X_{l}=q_{o_{l}}R_{l},\; \forall l\in \mathcal{L}\label{eq:pqloss}\\
    &\theta_{l}=X_{l}p_{s_{l}}-R_{l}q_{s_{l}},\; \forall l\in \mathcal{L}\label{eq:sinxprq}
\end{align}
\end{subequations}
Where $\Omega_{SOC-ACOPF}=\{p_{n},q_{n},p_{s_{l}},q_{s_{l}},p_{o_{l}},q_{o_{l}},V_{n},$ $\theta_{l},\theta_{n}\}$ $\in \Re^{m-\left | \mathcal{N} \right |}$ is the set of model variables. Since we only use voltage magnitude square variable $V_n$ in this SOC-ACOPF model, the voltage magnitude constraint \eqref{eq:vbound} is modified as $V_{n}\in (V^{min}_{n},\, V^{max}_{n}) = ((v^{min}_{n})^2,\, (v^{max}_{n})^2)$ in the SOC-ACOPF model-\eqref{SOPF-P}. Constraints \eqref{eq:plosscone}-\eqref{eq:pqloss} represent active power and reactive power losses. $K_{o_{_l}}$ are the equivalent ampacity constraint for power losses of branch $l$. These constraints are relaxed from constraints \eqref{eq:ploss}-\eqref{eq:qloss} in the o-ACOPF model-\eqref{NOPF}. The left side of constraint \eqref{eq:plosscone} bounds ${q}_{o_{l}}$, which equivalently bounds the ampacity of branch $l$ as explained in the next section. Note even though the right side of constraint \eqref{eq:plosscone} is not in the standard form of SOCP, it can be equivalently transformed to a rotated SOCP constraint as ${q}_{o_{l}}{V_{s(r)_{l}}} \geq ({ p^{2}_{s(r)_{l}}+q^{2}_{s(r)_{l}}}) X_{l}$. We keep the original format of the right side of constraint \eqref{eq:plosscone} in the SOC-ACOPF model-\eqref{SOPF-P} because the physical meaning is more clear.
Equation (\ref{eq:sinxprq}) is derived from the nonconvex constraint \eqref{eq:vsvrsin}. This derivation is based on the assumption $v_{s_{l}}v_{r_{l}} sin \theta_{l} \approx \theta_{l}$.
%\begin{itemize}
%\item \textbf{(a)} Voltage magnitude product $v_{s_{l}}v_{r_{l}}$ is approximately equal to 1 per-unit in (\ref{eq:vsvrsin});
%\item \textbf{(b)} Voltage phase angle difference across each line is small enough such that $\sin\theta_{l}\approx\theta_{l}$.
%\end{itemize}
The left side of equation \eqref{eq:sinxprq} can also be derived by the first-order Taylor series expansion of $v_{s_{l}}v_{r_{l}}\sin\theta_{l}$ at $v_{s_{l}}=v_{r_{l}}=1$ and $\theta_{l}=0$. We will show later that using \eqref{eq:sinxprq} actually relaxes the ACOPF model when $(v^{min}_n, v^{max}_n) = (0.9, 1.1)$ and $(\theta^{min}_l, \theta^{max}_l)  = (-\frac{\pi}{2}, \frac{\pi}{2})$. It is worth to mention that even though constraint \eqref{eq:sinxprq} is the only constraint associated with the voltage phase angle variable, it should be solved jointly with other constraints in order to guarantee the feasibility of $\theta_l, p_{ol}, q_{o,l}$.
\section{Deriving the {Branch} Ampacity constraint for the Power Losses}
Since the actual measurable power flows $(\widetilde{p}_{s_{l}},\widetilde{q}_{s_{l}})$ and current $\widetilde{i}_{s(r)_{l}}$ are different from the power flow variables $(p_{s_{l}},q_{s_{l}})$ and $i_{s(r)_{l}}$ that we are using in the SOC-ACOPF model-\eqref{SOPF-P}, it is necessary to derive the gap between $\widetilde{i}_{s(r)_{l}}$ and $i_{s(r)_{l}}$ to derive $K_{o_{l}}$ according to the known parameter $\widetilde{K}_l$.
From Fig. \ref{fig:trans_line_pi}, we have:
\begin{subequations}\label{appgap}
\begin{align}
&\widetilde{p}_{s(r)_{l}}=p_{s(r)_{l}} \label{eq:psps}\\
&\widetilde{q}_{s(r)_{l}}=q_{s(r)_{l}}-q_{cs(r)_{l}}\label{eq:qsqs}\\
&q_{cs(r)_{l}}=V_{s(r)_{l}}B_{s(r)_{l}}\label{eq:qcsvb}
\end{align}
Where $q_{cs(r)_{l}}$ is the reactive power injection from the sending- (receiving-) end shunt capacitance of the branch $l$, $B_{s(r)_{l}}$ is the shunt susceptance. The {branch} ampacity constraint is:
\begin{align}
&\left \| \widetilde{i}_{s(r)_{l}} \right \|^{2}=\frac{\widetilde{p}^{2}_{s(r)_{l}}+\widetilde{q}^{2}_{s(r)_{l}}}{V_{s(r)_{l}}}\leq \widetilde{K}_l\label{eq:psqsk}
\end{align}
From \eqref{eq:psps}-\eqref{eq:psqsk}, we can derive the gap $\Delta^2 I$ between $\left \| i_{s(r)_{l}} \right \|^{2}$ and $\left \| \widetilde{i}_{s(r)_{l}} \right \|^{2}$ {as}:
\vspace{-0.5cm}
\begin{align}
\Delta^2 I&=\left \| i_{s(r)_{l}} \right \|^{2}-\left \| \widetilde{i}_{s(r)_{l}} \right \|^{2}\nonumber\\
&=\frac{-q^{2}_{cs(r)_{l}}+2q_{s(r)_{l}}q_{cs(r)_{l}}}{V_{s(r)_{l}}} \nonumber\\
&=\frac{-V^{2}_{s(r)_{l}}B^2_{s(r)_{l}}+2q_{s(r)_{l}}V_{s(r)_{l}}B_{s(r)_{l}}}{V_{s(r)_{l}}} \nonumber\\
&=-V_{s(r)_{l}}B^2_{s(r)_{l}}+2q_{s(r)_{l}}B_{s(r)_{l}}
\end{align}
The branch ampacity constraint \eqref{eq:psqsk} is equivalent to:
\begin{align}
&i^2_{s(r)_{l}}=\frac{p^2_{s(r)_{l}}+q^2_{s(r)_{l}}}{V_{s(r)_{l}}}\leq K_l= \widetilde{K}_l+\Delta^2 I\label{eq:psqsk2}
\end{align}
%\begin{align}
%&\hspace{-100pt}\text{Or:}\hspace{70pt} K_l= K^{'}_l+\Delta^2 I\label{eq:psqsk3}
%\end{align}
The reactive power losses upper bounds $K_{ol}$ can be quantified as:
\begin{align}
K_{o_{l}}&= K_{l}X_{l}\nonumber\\
&=(\widetilde{K}_l+\Delta^2 I)X_{l}\nonumber\\
&=(\widetilde{K}_l-V_{s(r)_{l}}B^2_{s(r)_{l}}+2q_{s(r)_{l}}B_{s(r)_{l}})X_{l}\label{eq:kol}
\end{align}
\end{subequations}
Note equation \eqref{eq:kol} is linear. So, if we use the expression of $K_{ol}$ from \eqref{eq:kol} in the constraint \eqref{eq:plosscone}, the SOC-ACOPF model-\eqref{SOPF-P} is still convex and we avoid any approximation on the branch ampacity constraint.

It is worth to mention that using $K_{o_{l}}$ to {limit} the upper bound of power losses $p_{o_l}$ in \eqref{eq:plosscone} is more realistic than constraining the power flows $(\widetilde{p}_{s_{l}},\widetilde{q}_{s_{l}})$ or $(p_{s_{l}},q_{s_{l}})$ in the way of $\widetilde{p}^{2}_{s_{l}}+\widetilde{q}^{2}_{s_{l}}\leq S_l$ or ${p}^{2}_{s_{l}}+{q}^{2}_{s_{l}}\leq S_l$ (where $S_l$ is the maximum branch power flow). This is because:
\begin{itemize}
  \item (i) the branch ampacity is given by the manufacturers in the form of maximum current $\widetilde{i}^{max}_{s(r)_{l}}=\sqrt{\widetilde{K}_{l}}$.
  \item (ii) the temperature increase of the branch (which lead to insulation degrading) is actually caused by the power losses due to the current which is the typical variable constrained by the branch manufacturer.
  \item (iii) the voltage amplitudes $v_{s(r)_{l}}$ of both ends of the branch are varying during the power network operations. The maximum allowed power capacity $S_l=(v_{s(r)_{l}}\widetilde{i}^{max}_{s(r)_{l}})^2$ of the branch would then depend on the nodal voltage amplitudes at the branch ends.
\end{itemize}

\section{Properties of the SOC-ACOPF Model}
In order to rigorously prove the important properties of the SOC-ACOPF mdoel-\eqref{SOPF-P} and the o-ACOPF model-\eqref{NOPF}, we firstly give the definitions of some key words. Some well-defined terms in the control and optimization community are also re-stated here for sake of readability. For ease of illustration, all the mentioned optimization models in this paper are assumed as minimization problems i.e. we consider minimizing the objective functions. Maximization problems can be equivalently transformed to minimization problems by minimizing the negative of the objective functions (of the maximization problems). If multiple optimization models are mentioned in one definition or theorem here, unless otherwise specified, we always mean that they have the same objective function. {Note here we use the symbols $\bold{M}$ and $\bold{N}$ to denote two general optimization models. Our ACOPF model}-{\eqref{NOPF} and SOC-ACOPF model}-{\eqref{SOPF-P} can be regarded as examples of the optimization models $\bold{M}$ and $\bold{N}$.}

\begin{table*}[!htbp]
  \centering
  \caption{Overview of the Proposed Theorems}
    \resizebox{\textwidth}{!}{%
    \begin{tabular}{|c|l|}\hline
    Theorem & \hspace{10cm}Property \\\hline
    1     & The SOC-ACOPF model-(2) is a relaxation of the o-ACOPF model-(1). \\ \hline
    2     & Necessary condition to recover feasible solution of the o-ACOPF model-(1) from the solution of the SOC-ACOPF model-(1). \\ \hline
    3     & The SOC-ACOPF model-(2) with the additional constraint \eqref{eq:necerecover} is still a relaxation of the o-ACOPF model-(1). \\ \hline
    4     & Recover the global optimal solution of the o-ACOPF model-(1) from the solution of the SOC-ACOPF model-(2). \\ \hline
    5     & The parametric optimal value functions of the o-ACOPF model-(1) and the SOC-ACOPF model-(2) are monotonic. \\ \hline
    6     & Tight solution of the SOC-ACOPF model-(2) always exist. \\ \hline
    \end{tabular}}%
  \label{tab:theorem_oview}%
\end{table*}%
\begin{definition}
Feasible solution. A feasible solution $\Omega_{0,\bold{M}}$ of an optimization model {$\bold{M}$} is the solution satisfying jointly all the constraints of {$\bold{M}$}.
\end{definition}
\begin{definition} 
Feasible region. The feasible region $\mathcal{R}_{\bold{M}}$ of an optimization model {$\bold{M}$} is the set of all feasible solutions of {$\bold{M}$}.
\end{definition}
\begin{definition}
Global optimal solution. A global optimal solution $\Omega^*_M \in \mathcal{R}_{\bold{M}}$ of an optimization model {$\bold{M}$} is the feasible solution of {$\bold{M}$} which gives the minimal (for minimization problem) objective function value i.e. $\{f(\Omega^*_M)\leq f(\Omega_{0,\bold{M}}),\,\forall\, \Omega_{0,\bold{M}} \in \mathcal{R}_{\bold{M}}\}$.
\end{definition}
\begin{definition}
Tight solution. A tight solution of an optimization model {$\bold{M}$} is the feasible solution making some inequality constraints of {$\bold{M}$} active (binding). In this paper, we are considering the tight solution for the right side of the SOCP constraint \eqref{eq:plosscone} in the SOC-ACOPF model-\eqref{SOPF-P}.
\end{definition}
\begin{definition}
Relaxation. An optimization model {$\bold{N}$} is said to be a relaxation or a relaxed model of optimization model $\bold{M}$ if and only if the feasible region of {$\bold{M}$} is a subset of the feasible region of $\bold{N}$ i.e. $\mathcal{R}_{\bold{M}} \subseteq \mathcal{R}_{\bold{N}}$.
\end{definition}
\begin{definition}
Approximation. An optimization model {$\bold{N}$} is said to be an (strict) approximation or an approximated model of optimization model {$\bold{M}$} if and only if the feasible regions of {$\bold{M}$} and {$\bold{N}$} are approximately equal but neither one is a subset of another i.e. $\{\mathcal{R}_{\bold{N}}\approx \mathcal{R}_{\bold{M}}, \mathcal{R}_{\bold{N}} \nsubseteq \mathcal{R}_{\bold{M}}, \mathcal{R}_{\bold{M}} \nsubseteq \mathcal{R}_{\bold{N}}\}$.
\end{definition}
\begin{subequations}
\begin{definition}
Relaxation gap. For the SOC-ACOPF model-\eqref{SOPF-P}, the relaxation gap of active power loss $Gap^{po}_{l}$ is defined as:
\begin{align}
Gap^{po}_{l}:=p_{o_{l}}-\frac{p^2_{s_{l}}+q^{2}_{s_{l}}}{V_{s_{l}}}R_{l},\; \forall l\in \mathcal{L}\\
\end{align}

The relaxation gap of reactive power loss $Gap^{qo}_{l}$ is defined as:
\begin{align}
Gap^{qo}_{l}:=q_{o_{l}}-\frac{p^2_{s_{l}}+q^{2}_{s_{l}}}{V_{s_{l}}}X_{l},\; \forall l\in \mathcal{L}
\end{align}
\end{definition}
To give an overview of the proposed theorems, we summarize {the} theorems in Table \ref{tab:theorem_oview}. Theorem 4 is a based on theorem 1, theorem 2 and thorem 3. Theorem 6 is based on theorem 5.

\begin{theorem}\label{theom:relax}
Assume $(v^{min}_n, v^{max}_n) = (0.9, 1.1)$ and $(\theta^{min}_l, \theta^{max}_l)$ $=(-\frac{\pi}{2}, \frac{\pi}{2})$, replacing \eqref{eq:vsvrsin} by \eqref{eq:sinxprq} relaxes the o-ACOPF model-\eqref{NOPF} i.e. the SOC-ACOPF model-\eqref{SOPF-P} is a relaxation of the o-ACOPF model-\eqref{NOPF} (rather than an approximation of the o-ACOPF model-\eqref{NOPF}).
\end{theorem}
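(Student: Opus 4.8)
The plan is to prove directly that the feasible region of the o-ACOPF model-\eqref{NOPF} is contained in that of the SOC-ACOPF model-\eqref{SOPF-P}; by Definitions~5 and~6 this both certifies the relaxation property and rules out the ``approximation'' alternative, which would require $\mathcal{R}_{\text{o-ACOPF}}\nsubseteq\mathcal{R}_{\text{SOC-ACOPF}}$. Every constraint of \eqref{SOPF-P} other than \eqref{eq:sinxprq}, \eqref{eq:plosscone} and \eqref{eq:pqloss} appears verbatim in \eqref{NOPF}, so it suffices to take an arbitrary feasible point $\Omega_0$ of \eqref{NOPF}, keep all of its shared coordinates, possibly re-assign the angle coordinates $\theta_l,\theta_n$, and check the three modified constraints. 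The loss part is routine: from \eqref{eq:ploss}--\eqref{eq:qloss} one has $p_{o_l}X_l=q_{o_l}R_l$, which is exactly \eqref{eq:pqloss}, and $q_{o_l}=(p_{s_l}^2+q_{s_l}^2)X_l/V_{s_l}$, so the right inequalities of \eqref{eq:plosscone} hold with equality (the sending- and receiving-end forms agreeing on $\mathcal{R}_{\text{o-ACOPF}}$ by the branch-flow identity behind \eqref{eq:VsVr_chap2}); the left inequality $q_{o_l}\le K_{o_l}$ is the ampacity constraint \eqref{eq:capacity} rewritten through the gap identity $K_{o_l}=K_lX_l$ of Section~III, equations \eqref{eq:psqsk2}--\eqref{eq:kol}.

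The crux is the phase-angle substitution. For $\Omega_0$, constraint \eqref{eq:vsvrsin} together with the box \eqref{eq:thetal_bound} gives $X_lp_{s_l}-R_lq_{s_l}=v_{s_l}v_{r_l}\sin\theta_l$ with $|\theta_l|<\tfrac{\pi}{2}$, hence $|X_lp_{s_l}-R_lq_{s_l}|<v_{s_l}v_{r_l}$. This is where the two numerical hypotheses enter: $v_{s_l},v_{r_l}\in(0.9,1.1)$ forces $v_{s_l}v_{r_l}<1.1^2=1.21<\tfrac{\pi}{2}$, so $|X_lp_{s_l}-R_lq_{s_l}|<\tfrac{\pi}{2}$ and the reassigned value $\theta_l':=X_lp_{s_l}-R_lq_{s_l}$ satisfies \eqref{eq:sinxprq} while staying inside $(\theta_l^{min},\theta_l^{max})=(-\tfrac{\pi}{2},\tfrac{\pi}{2})$. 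The margin $1.21<1.5708$ is genuine but modest, so the statement is essentially sharp in the data: e.g.\ a voltage bound $v^{max}=1.3$ would give $1.69>\tfrac{\pi}{2}$ and the inclusion could break. The inclusion is a relaxation and not an approximation because a point with $\theta_l=X_lp_{s_l}-R_lq_{s_l}$ typically violates $v_{s_l}v_{r_l}\sin\theta_l=X_lp_{s_l}-R_lq_{s_l}$, so the reverse containment fails.

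The step I expect to be the main obstacle is the consistency of the reassigned angles on meshed networks. Constraint \eqref{eq:def_thetal} couples $\theta_l$ to the nodal angles $\theta_n$, so compatible $\theta_n'\in(\theta_n^{min},\theta_n^{max})$ exist only if the loop (Kirchhoff voltage law) condition $\sum_{l\in\mathcal{C}}\theta_l'=0\bmod 2\pi$ holds for every cycle $\mathcal{C}$, cf.\ \eqref{eq:cons_cyclic}. For a radial network there are no cycles and one simply integrates $\theta_l'$ along the spanning tree, so the theorem is immediate. For a meshed network $\sum_{l\in\mathcal{C}}\theta_l'=\sum_{l\in\mathcal{C}}v_{s_l}v_{r_l}\sin\theta_l$, which feasibility of $\Omega_0$ does not force to vanish; here I would either use the bound $|\theta_l'|<1.21$ to keep each loop sum well inside one $2\pi$-period and absorb the residual into the $\theta_n'$ within their bounds, or else phrase the comparison on the projection onto $(p_{s_l},q_{s_l},V_{s_l},V_{r_l})$, on which \eqref{eq:sinxprq} with \eqref{eq:thetal_bound} is strictly implied by \eqref{eq:vsvrsin} with \eqref{eq:thetal_bound} by the computation above, treating the $\theta$-variables of \eqref{SOPF-P} as slack variables pinned by \eqref{eq:sinxprq}. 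Carrying one of these two arguments through rigorously is what would need the most care.
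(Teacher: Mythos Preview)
Your core argument---keep all non-angle coordinates of a feasible $\Omega_0$ and reassign $\theta_l':=X_lp_{s_l}-R_lq_{s_l}=v_{s_l}v_{r_l}\sin\theta_l$, then invoke $|v_{s_l}v_{r_l}\sin\theta_l|<1.1^2=1.21<\tfrac{\pi}{2}$ to land inside the angle box---is exactly the paper's proof. Your verification of the loss constraints \eqref{eq:plosscone}--\eqref{eq:pqloss} and the ampacity upper bound is more explicit than the paper, which does not discuss them at all.

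Where you and the paper part ways is your third paragraph. The concern you flag about cycle consistency of the reassigned $\theta_l'$ with constraint \eqref{eq:def_thetal} is real, and the paper's proof does not resolve it: the paper sets $\theta_{1,n}:=\theta_{0,n}$ while simultaneously setting $\theta_{1,l}:=v_{0,s_l}v_{0,r_l}\sin\theta_{0,l}$, and then asserts that $\Omega_1$ is feasible for \eqref{SOPF-P}. Since \eqref{eq:def_thetal} is listed among the constraints of \eqref{SOPF-P} and in general $v_{0,s_l}v_{0,r_l}\sin\theta_{0,l}\ne\theta_{0,s_l}-\theta_{0,r_l}$, the mapped point as written violates \eqref{eq:def_thetal}. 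So the obstacle you identify is not something you are missing relative to the paper; it is a step the paper itself glosses over. Of your two suggested repairs, the first (absorbing a nonzero loop residual into the nodal angles) cannot work as stated, since any choice of $\theta_n'$ forces $\sum_{l\in\mathcal C}(\theta_{s_l}'-\theta_{r_l}')=0$ exactly; the second (viewing the $\theta$-variables of \eqref{SOPF-P} as slacks pinned by \eqref{eq:sinxprq} and comparing on the projection) is closer in spirit to how the paper appears to be treating its own model, and is the more promising route to make the argument rigorous on meshed networks.
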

\begin{proof}
We prove this theorem by showing that any point in the feasible region of o-ACOPF model-\eqref{NOPF} can always be mapped to a point located in the feasible region of the SOC-ACOPF model-\eqref{SOPF-P}. However, the reverse statement does not hold. In other words, some feasible solutions of the SOC-ACOPF model-\eqref{SOPF-P} are not feasible for the o-ACOPF model-\eqref{NOPF}. This means the feasible region of the SOC-ACOPF model-\eqref{SOPF-P} covers the feasible region of the o-ACOPF mode-\eqref{NOPF}.

Suppose $\Omega_{0,ACOPF}=\{p_{0,n},q_{0,n},p_{0,s_{l}},q_{0,s_{l}},p_{0,o_{l}},$ $q_{0,o_{l}},V_{0,n},v_{0,n},\theta_{0,l},\theta_{0,n}\} \in \Re^{m}$ is one feasible solution of the o-ACOPF model-\eqref{NOPF}. From \eqref{eq:vsvrsin} we have:
\begin{align}
v_{0,s_{l}}v_{0,r_{l}}\sin\theta_{0,l}=X_{l}p_{0,s_{l}}-R_{l}q_{0,s_{l}},\; \forall l\in \mathcal{L}\label{eq:vsvrsin0}
\end{align}
We can map $\Omega_{0,ACOPF}$ to a feasible solution $\Omega_{1,SOC_ACOPF}=\{p_{1,n},q_{1,n},p_{1,s_{l}},q_{1,s_{l}},p_{1,o_{l}},q_{1,o_{l}},$ $V_{1,n},\theta_{1,l},\theta_{1,n}\} \in \Re^{m-\left | \mathcal{N} \right |}$ of the SOC-ACOPF model-\eqref{SOPF-P} as:
\begin{align}
&\left \{ \Omega_{1,SOC-ACOPF}\setminus \theta_{1,l} \right \} :=\left \{ \Omega_{0,ACOPF}\setminus (\theta_{0,l}, v_{0,n})\right \}\label{eq:omega1}\\
&\theta_{1,l}:=v_{0,s_{l}}v_{0,r_{l}}\sin\theta_{0,l},\; \forall l\in \mathcal{L}\label{eq:vsvrsin1}
\end{align}
Since $v_{0,s_{l}}v_{0,r_{l}}\sin\theta_{0,l}\in (-1.21, 1.21)\subset (-\frac{\pi}{2}, \frac{\pi}{2})$, equation \eqref{eq:vsvrsin1} is always mappable. Note $\theta_{1,l}$ is not necessarily equal to $\theta_{0,l}$.
On the other hand, mapping $\Omega_{1}$ to $\Omega_{0}$ ($v_{0,s_{l}}v_{0,r_{l}}\sin\theta_{0,l}=\theta_{1,l}$) is not feasible when $\theta_{1,l}>1.21$ or $\theta_{1,l}<-1.21$.
\end{proof}
Theorem \ref{theom:relax} shows that the feasible region of SOC-ACOPF model-\eqref{SOPF-P} covers all the feasible region of o-ACOPF model-\eqref{NOPF}. It is worth to mention that the assumptions of $(v^{min}_n, v^{max}_n) = (0.9, 1.1)$ and $(\theta^{min}_l, \theta^{max}_l)  = (-\frac{\pi}{2}, \frac{\pi}{2})$ are the practical operational requirements for power system. These assumptions can be relaxed to some {degree} while Theorem \ref{theom:relax} is still valid.

\begin{theorem}\label{theom:recover}
If $(\theta^{min}_l,\theta^{max}_l) \subseteq (-\frac{\pi}{2}, \frac{\pi}{2})$ and $\theta^{min}_l=-\theta^{max}_l$, the necessary condition to recover (map) a feasible solution of the o-ACOPF model-\eqref{NOPF} from the (optimal) solution $\Omega_{1,SOC-ACOPF}$ of SOC-ACOPF model-\eqref{SOPF-P} is:
\begin{align}
V_{s_{l}}V_{r_{l}}sin^2(\theta^{max}_l) \geq \theta^2_l ,\; \forall l\in \mathcal{L} \label{eq:necerecover}
\end{align}
Note constraint \eqref{eq:necerecover} is conic and thus convex.
\end{theorem}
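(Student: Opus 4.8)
The plan is to run the correspondence from the proof of Theorem~\ref{theom:relax} in reverse. Given an (optimal) solution $\Omega_{1,SOC-ACOPF}$ of the SOC-ACOPF model-\eqref{SOPF-P}, the structured way to ``recover'' a feasible solution $\Omega_{0,ACOPF}$ of the o-ACOPF model-\eqref{NOPF} is to keep every variable common to the two models unchanged, to set $v_{0,n}:=\sqrt{V_{1,n}}$ (so that \eqref{eq:veq} holds, which is well defined since the voltage-square variables are bounded away from zero), and to replace the relaxed quantity $\theta_{1,l}$ by a genuine phase-angle difference $\theta_{0,l}$. The only constraint of model-\eqref{NOPF} that then couples $\theta_{0,l}$ to the already-fixed data is the nonconvex identity \eqref{eq:vsvrsin}, so what I need is to translate its solvability into \eqref{eq:necerecover}.

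First I would evaluate \eqref{eq:vsvrsin} at the recovered point. Because $\Omega_{1,SOC-ACOPF}$ satisfies \eqref{eq:sinxprq}, the right-hand side of \eqref{eq:vsvrsin} equals $X_l p_{1,s_l}-R_l q_{1,s_l}=\theta_{1,l}$, while the left-hand side is $v_{0,s_l}v_{0,r_l}\sin\theta_{0,l}=\sqrt{V_{1,s_l}V_{1,r_l}}\,\sin\theta_{0,l}$. Hence feasibility of $\Omega_{0,ACOPF}$ forces $\sqrt{V_{1,s_l}V_{1,r_l}}\,\sin\theta_{0,l}=\theta_{1,l}$ for every $l\in\mathcal{L}$.

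Next I would invoke the hypotheses. Since $(\theta^{min}_l,\theta^{max}_l)\subseteq(-\tfrac{\pi}{2},\tfrac{\pi}{2})$ and $\theta^{min}_l=-\theta^{max}_l$, the bound \eqref{eq:thetal_bound} gives $|\theta_{0,l}|\le\theta^{max}_l$; as $\sin$ is odd and strictly increasing on $(-\tfrac{\pi}{2},\tfrac{\pi}{2})$, this yields $|\sin\theta_{0,l}|\le\sin\theta^{max}_l$. Substituting into the identity above, $|\theta_{1,l}|=\sqrt{V_{1,s_l}V_{1,r_l}}\,|\sin\theta_{0,l}|\le\sqrt{V_{1,s_l}V_{1,r_l}}\,\sin\theta^{max}_l$, and squaring the two nonnegative sides gives $\theta_{1,l}^2\le V_{1,s_l}V_{1,r_l}\sin^2\theta^{max}_l$, which is exactly \eqref{eq:necerecover}. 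For the closing remark, $\sin^2(\theta^{max}_l)$ is a fixed nonnegative constant and $V_{s_l},V_{r_l}\ge 0$, so \eqref{eq:necerecover} is of the form $\theta_l^2\le\big(\sin^2(\theta^{max}_l)\,V_{s_l}\big)V_{r_l}$, a rotated second-order cone constraint and hence convex.

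The point requiring the most care is the logical status of the claim: \eqref{eq:necerecover} is necessary but not sufficient for a successful recovery. Even when it holds one still has to (i) actually pick $\theta_{0,l}$ with $\sin\theta_{0,l}=\theta_{1,l}/\sqrt{V_{1,s_l}V_{1,r_l}}$ in the admissible range, which is possible precisely because \eqref{eq:necerecover} places the right-hand side in $[-\sin\theta^{max}_l,\sin\theta^{max}_l]$, and (ii) make sure the relaxed loss equalities \eqref{eq:ploss}-\eqref{eq:qloss} also hold, i.e.\ that the SOCP constraint \eqref{eq:plosscone} is tight at $\Omega_{1,SOC-ACOPF}$. I would therefore state explicitly that this theorem isolates only the phase-angle recovery condition, leaving tightness to the subsequent theorems; I would also note that if \eqref{eq:thetal_bound} is read with strict inequalities the same argument delivers the strict version of \eqref{eq:necerecover}, the non-strict form in the statement being the closed relaxation actually used in the model.
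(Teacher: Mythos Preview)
Your proposal is correct and follows essentially the same approach as the paper: define the recovery map by keeping all common variables, setting $v_{0,n}=\sqrt{V_{1,n}}$, and reading off $\theta_{0,l}$ from the identity $\sqrt{V_{1,s_l}V_{1,r_l}}\sin\theta_{0,l}=\theta_{1,l}$ (the paper writes this via $\arcsin$), then using monotonicity of $\sin$ on the symmetric interval and squaring. Your added remarks on necessity versus sufficiency and on the role of tightness of \eqref{eq:plosscone} match what the paper states in the discussion following the proof.
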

\begin{proof}
If a feasible solution $\Omega_0$ of the o-ACOPF model-\eqref{NOPF} is recovered (mapped) from the (optimal) solution $\Omega_{1}$ of the SOC-ACOPF model-\eqref{SOPF-P}:
\begin{align}
&\left \{ \Omega_{0}\setminus (\theta_{0,l}, v_{1,n}) \right \} :=\left \{ \Omega_{1}\setminus (\theta_{1,l})\right \} \label{eq:recover1}\\
&v_{0,n}:=\sqrt{V_{1,n}},\; \forall n\in N \label{eq:recover2}\\
&\theta_{0,l}:=arcsin(\frac{X_{l}p_{s_{1,l}}-R_{l}q_{s_{1,l}}}{v_{1,s_{l}}v_{1,r_{l}}})\nonumber\\
&\hspace{18pt}=arcsin(\frac{\theta_{1,l}}{v_{1,s_{l}}v_{1,r_{l}}}),\; \forall l\in \mathcal{L}\label{eq:thetarecover}
\end{align}
Since $sin(\theta_{1,l})$ is monotonic in $(\theta^{min}_l,\theta^{max}_l) \subseteq (-\frac{\pi}{2}, \frac{\pi}{2})$, equation \eqref{eq:thetarecover} implies:
\begin{align}
 sin(\theta^{min}_l) \leq \frac{\theta_{1,l}}{v_{s_{1,l}}v_{r_{1,l}}} \leq sin(\theta^{max}_l) ,\; \forall l\in \mathcal{L}\label{eq:thetarecover2}
\end{align}
Considering $sin^2(\theta^{min}_l)=sin^2(\theta^{max}_l)$ when $\theta^{min}_l=-\theta^{max}_l$, constraint \eqref{eq:thetarecover2} is equivalent to:
\begin{align}
\frac{\theta^2_{1,l}}{V_{s_{l}}V_{r_{l}}sin^2(\theta^{max}_l)} \leq 1  ,\; \forall l\in \mathcal{L}
\end{align}
Or equivalently, $V_{s_{1,l}}V_{r_{1,l}}sin^2(\theta^{max}_l) \geq \theta^2_{1,l} $.
\end{proof}
Theorem \ref{theom:recover} shows that if we add the convex constraint \eqref{eq:necerecover} to the SOC-ACOPF model-\eqref{SOPF-P}, we can improve the solution feasibility towards the o-ACOPF model-\eqref{NOPF}. Please note that equations \eqref{eq:recover1}-\eqref{eq:thetarecover} is only one way to recover the feasible solution. Condition \eqref{eq:necerecover} is necessary to recover the feasible solution by using this approach.  There can be other feasible solution recovery approaches which do not require condition \eqref{eq:necerecover}.

%\addtocounter{theorem}{-2}
\begin{theorem}\label{theom:relax2}
The SOC-ACOPF model-\eqref{SOPF-P} with the additional constraint \eqref{eq:necerecover} is a relaxation of the o-ACOPF model-\eqref{NOPF}.
\end{theorem}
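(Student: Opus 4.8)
The plan is to reuse, essentially verbatim, the explicit point-to-point map constructed in the proof of Theorem~\ref{theom:relax}, and then simply check that this same map also respects the extra constraint \eqref{eq:necerecover}; no new construction is required. Concretely, I would take an arbitrary feasible solution $\Omega_{0,ACOPF}$ of the o-ACOPF model-\eqref{NOPF} and send it to $\Omega_{1,SOC-ACOPF}$ exactly as in \eqref{eq:omega1}--\eqref{eq:vsvrsin1}, i.e.\ carrying over every variable except that $v_{0,n}$ is replaced by $V_{1,n}:=v_{0,n}^{2}$ and $\theta_{1,l}:=v_{0,s_{l}}v_{0,r_{l}}\sin\theta_{0,l}$. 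By Theorem~\ref{theom:relax} this image point already satisfies all constraints of the SOC-ACOPF model-\eqref{SOPF-P}, so the entire content of the new theorem is the single verification that it also satisfies \eqref{eq:necerecover}.

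To do that verification I would substitute the definitions into \eqref{eq:necerecover}. Since $V_{1,s_{l}}=v_{0,s_{l}}^{2}$ and $V_{1,r_{l}}=v_{0,r_{l}}^{2}$, one has $\theta_{1,l}^{2}=v_{0,s_{l}}^{2}v_{0,r_{l}}^{2}\sin^{2}\theta_{0,l}=V_{1,s_{l}}V_{1,r_{l}}\sin^{2}\theta_{0,l}$, so \eqref{eq:necerecover} for this point is equivalent, after dividing by the strictly positive quantity $V_{1,s_{l}}V_{1,r_{l}}$, to $\sin^{2}\theta_{0,l}\le\sin^{2}\theta^{max}_{l}$. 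Feasibility of $\Omega_{0,ACOPF}$ for \eqref{NOPF} gives $\theta_{0,l}\in(\theta^{min}_{l},\theta^{max}_{l})$ with $\theta^{min}_{l}=-\theta^{max}_{l}$ and $\theta^{max}_{l}<\tfrac{\pi}{2}$, hence $|\theta_{0,l}|<\theta^{max}_{l}$, and since $t\mapsto\sin^{2}t$ is increasing on $[0,\tfrac{\pi}{2})$ we conclude $\sin^{2}\theta_{0,l}<\sin^{2}\theta^{max}_{l}$. Thus $\Omega_{1,SOC-ACOPF}$ lies in the feasible region of the SOC-ACOPF model-\eqref{SOPF-P} augmented with \eqref{eq:necerecover}.

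Since the map above takes every feasible point of the o-ACOPF model-\eqref{NOPF} into the feasible region of the augmented SOC-ACOPF model (and, as in Theorem~\ref{theom:relax}, leaves the objective value unchanged because $f$ is evaluated on the shared variables), the feasible region of the o-ACOPF model is contained, in the mapping sense used throughout Section~IV, in that of the augmented model; this is precisely the definition of a relaxation, which proves the theorem. The step I would be most careful about is not a calculation but a bookkeeping point: making explicit that the symmetric angle-bound assumption $\theta^{min}_{l}=-\theta^{max}_{l}$ and the voltage-bound assumption $(v^{min}_{n},v^{max}_{n})=(0.9,1.1)$ inherited from Theorems~\ref{theom:relax} and~\ref{theom:recover} are exactly what make the Theorem~\ref{theom:relax} map well defined and make $\sin^{2}(\cdot)$ monotone on the relevant range; without the symmetry one would instead need $\sin^{2}$ evaluated at $\max(|\theta^{min}_{l}|,|\theta^{max}_{l}|)$ on the right-hand side of \eqref{eq:necerecover}, and the argument would go through with that modified constraint.
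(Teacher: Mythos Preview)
Your proposal is correct and follows exactly the approach the paper takes: the paper's own proof simply states that ``the same procedure'' as in Theorem~\ref{theom:relax} works and then notes that the reverse inclusion fails because the cone constraint~\eqref{eq:plosscone} can be strict. Your write-up is actually more complete than the paper's, since you explicitly carry out the verification that the mapped point satisfies~\eqref{eq:necerecover} via the identity $\theta_{1,l}^{2}=V_{1,s_{l}}V_{1,r_{l}}\sin^{2}\theta_{0,l}$ and the monotonicity of $\sin^{2}$ on $[0,\tfrac{\pi}{2})$, whereas the paper leaves that step implicit.
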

\begin{proof}
We can use the same procedure in proving theorem \ref{theom:relax} to prove theorem \ref{theom:relax2} i.e. any feasible solution of the o-ACOPF model-\eqref{NOPF} can be mapped to a feasible solution of the SOC-ACOPF model-\eqref{SOPF-P} with {the} additional constraint \eqref{eq:necerecover}. However, the reverse statement is not true when there is at least one $\hat{l} \in \mathcal{L}$ (in the feasible solution of the SOC-ACOPF model-\eqref{SOPF-P} with the additional constraint \eqref{eq:necerecover}) such that ${q}_{o_{\hat{l}}} > \frac{ p^{2}_{s(r)_{\hat{l}}}+q^{2}_{s(r)_{\hat{l}}}}{V_{s(r)_{\hat{l}}}} X_{\hat{l}}$ (which fails to satisfy constraints \eqref{eq:qloss} in the o-ACOPF model-\eqref{NOPF} obviously).
\end{proof}
Theorem \ref{theom:relax2} shows that the SOC-ACOPF model-\eqref{SOPF-P} with the additional constraint \eqref{eq:necerecover} still covers the feasible region of the o-ACOPF model-\eqref{NOPF}.

\begin{theorem}\label{theom:global}
If the optimal solution $\Omega^{*}$ of the SOC-ACOPF model-\eqref{SOPF-P} with {the} additional constraint \eqref{eq:necerecover} gives tight solution such that ${q}^{*}_{o_{l}} = \frac{ p^{*2}_{s(r)_{l}}+q^{*2}_{s(r)_{l}}}{V^{*}_{s(r)_{l}}} X_{l},\; \forall l \in \mathcal{L}$, the global optimal solution $\Omega^{*}_{0}$ of the o-ACOPF model-\eqref{NOPF} is $\Omega^{*}_{0}:=\left \{ \Omega^{*}\setminus (\theta^{*}_{l},V^{*}_{n}) \right \} \cup \left \{ v^{*}_{0,n}:=\sqrt{V^{*}_{n}},\, \theta^{*}_{0,l}:=arcsin(\frac{\theta^{*}_{l}}{v^{*}_{s_{l}}v^{*}_{r_{l}}}) \right \}$.
\end{theorem}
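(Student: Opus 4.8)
The plan is to prove the statement in two stages. First I would verify that the candidate point $\Omega^{*}_{0}$ obtained from the recovery map in the statement is a feasible point of the o-ACOPF model-\eqref{NOPF}; this is where the tightness hypothesis on the right side of \eqref{eq:plosscone} is consumed. Second, I would invoke the relaxation property established in Theorem \ref{theom:relax2} together with the fact that the recovery map does not change the value of the objective, and close the argument with a short sandwich inequality.

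For the feasibility step I would check the constraints of model-\eqref{NOPF} one group at a time. The power-balance equations \eqref{eq:p_balance_chap1}--\eqref{eq:q_balance_chap1}, the longitudinal voltage-drop equation \eqref{eq:VsVr_chap2}, the angle definition \eqref{eq:def_thetal} and the box constraints \eqref{eq:thetan_bound}--\eqref{eq:qbound} are literally shared by both models and involve only the coordinates that the recovery map leaves untouched, so they hold at $\Omega^{*}_{0}$ automatically. The identity \eqref{eq:veq} holds by construction, $v^{*}_{0,n} := \sqrt{V^{*}_{n}}$, since $V^{*}_{n}\ge (v^{min}_{n})^{2}>0$; the voltage box \eqref{eq:vbound} then follows from $V^{*}_{n}\in((v^{min}_{n})^{2},(v^{max}_{n})^{2})$. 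Tightness of \eqref{eq:plosscone} gives $q^{*}_{o_{l}}=\frac{p^{*2}_{s_{l}}+q^{*2}_{s_{l}}}{V^{*}_{s_{l}}}X_{l}$, which is exactly \eqref{eq:qloss}, and feeding this into the linking equation \eqref{eq:pqloss} yields $p^{*}_{o_{l}}=q^{*}_{o_{l}}R_{l}/X_{l}=\frac{p^{*2}_{s_{l}}+q^{*2}_{s_{l}}}{V^{*}_{s_{l}}}R_{l}$, i.e. \eqref{eq:ploss}. Constraint \eqref{eq:vsvrsin} holds because, by the definition of $\theta^{*}_{0,l}$, $\sin\theta^{*}_{0,l}=\theta^{*}_{l}/(v^{*}_{s_{l}}v^{*}_{r_{l}})$, so $v^{*}_{s_{l}}v^{*}_{r_{l}}\sin\theta^{*}_{0,l}=\theta^{*}_{l}=X_{l}p^{*}_{s_{l}}-R_{l}q^{*}_{s_{l}}$, the last equality being \eqref{eq:sinxprq}. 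The angle box \eqref{eq:thetal_bound} is precisely the content of Theorem \ref{theom:recover}: the added constraint \eqref{eq:necerecover} forces $\left|\theta^{*}_{l}/(v^{*}_{s_{l}}v^{*}_{r_{l}})\right|\le|\sin\theta^{max}_{l}|$, so the recovered angle is well defined and, using $\theta^{min}_{l}=-\theta^{max}_{l}$ and monotonicity of $\sin$ on $(-\tfrac{\pi}{2},\tfrac{\pi}{2})$, lies in $(\theta^{min}_{l},\theta^{max}_{l})$. Finally the ampacity constraint \eqref{eq:capacity} holds because, by the derivation of Section III, the inequality $q_{o_{l}}\le K_{o_{l}}$ combined with the just-established tightness is equivalent to $\frac{p^{2}_{s(r)_{l}}+q^{2}_{s(r)_{l}}}{V_{s(r)_{l}}}\le K_{l}$, which is \eqref{eq:psqsk2}, i.e. \eqref{eq:capacity}.

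For the optimality step, let $\Omega^{\star}$ denote an arbitrary global optimal solution of the o-ACOPF model-\eqref{NOPF}, and let $\mathcal{R}$ be the feasible region of the SOC-ACOPF model-\eqref{SOPF-P} augmented with \eqref{eq:necerecover}. By Theorem \ref{theom:relax2}, $\mathcal{R}_{\text{o-ACOPF}}\subseteq \mathcal{R}$; moreover the recovery map and the embedding used in that proof change only $v_{n}$ and $\theta_{l}$ and leave unchanged the generation and loss variables $p_{n},q_{n},p_{o_{l}},q_{o_{l}}$ on which the convex objective depends, so $f(\Omega^{*}_{0})=f(\Omega^{*})$ and $f$ is unchanged when $\Omega^{\star}$ is embedded into $\mathcal{R}$. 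Since $\Omega^{*}$ minimizes $f$ over $\mathcal{R}\supseteq \mathcal{R}_{\text{o-ACOPF}}\ni\Omega^{\star}$, we get $f(\Omega^{*})\le f(\Omega^{\star})$; since $\Omega^{*}_{0}$ is feasible for model-\eqref{NOPF} by the first step, $f(\Omega^{\star})\le f(\Omega^{*}_{0})$; and $f(\Omega^{*}_{0})=f(\Omega^{*})$. Chaining, $f(\Omega^{*})\le f(\Omega^{\star})\le f(\Omega^{*}_{0})=f(\Omega^{*})$, hence equality throughout and $\Omega^{*}_{0}$ is a global optimal solution of the o-ACOPF model-\eqref{NOPF}.

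I expect the main obstacle to be the feasibility bookkeeping rather than the optimality argument: one has to make sure that every nonconvex equality of model-\eqref{NOPF} --- \eqref{eq:veq}, \eqref{eq:vsvrsin}, \eqref{eq:ploss}, \eqref{eq:qloss} --- is reproduced exactly by the recovered point (this is exactly where the tightness of \eqref{eq:plosscone}, the linking equation \eqref{eq:pqloss}, and the definition of $\theta^{*}_{0,l}$ are each used), and that the recovered $v^{*}_{0,n}$ and $\theta^{*}_{0,l}$ still respect their box constraints, which is the precise purpose of carrying along the extra constraint \eqref{eq:necerecover} from Theorem \ref{theom:recover}. A small but essential point to state explicitly is the invariance of the objective value under the recovery map, since that is what guarantees the relaxation lower bound is actually attained at $\Omega^{*}_{0}$.
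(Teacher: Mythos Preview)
Your two-stage plan---first verify feasibility of the recovered point, then close with the sandwich inequality using the relaxation lower bound and objective invariance---is exactly what the paper's one-sentence proof (``a direct result of Theorems~\ref{theom:relax}, \ref{theom:recover} and \ref{theom:relax2} considering the optimal objective of the SOC-ACOPF is a lower bound for that of the o-ACOPF'') leaves implicit; the approach is the same, just carried out in full.

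One step in your feasibility bookkeeping does not go through as written. You list the angle-definition constraint \eqref{eq:def_thetal}, $\theta_{l}=\theta_{s_{l}}-\theta_{r_{l}}$, among those that ``involve only the coordinates that the recovery map leaves untouched,'' but the recovery map \emph{does} change $\theta_{l}$ (to $\theta^{*}_{0,l}:=\arcsin(\theta^{*}_{l}/(v^{*}_{s_{l}}v^{*}_{r_{l}}))$) while, per the statement, keeping the nodal angles $\theta^{*}_{n}$ from the SOC solution. Since the SOC solution already satisfies $\theta^{*}_{l}=\theta^{*}_{s_{l}}-\theta^{*}_{r_{l}}$, constraint \eqref{eq:def_thetal} at $\Omega^{*}_{0}$ would force $\theta^{*}_{0,l}=\theta^{*}_{l}$, which is false in general. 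A complete feasibility argument must also rebuild the nodal angles $\theta^{*}_{0,n}$ consistently with the recovered branch differences (e.g.\ by integrating along a spanning tree from a reference bus) and, for meshed networks, verify the cyclic condition \eqref{eq:cons_cyclic} on the recovered $\theta^{*}_{0,l}$. The paper's terse proof does not address this point either, so this is a gap in the theorem as literally stated rather than a flaw in your overall strategy; but your sentence claiming \eqref{eq:def_thetal} holds ``automatically'' is incorrect and should be replaced by such an argument.
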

\begin{proof}
Theorem \ref{theom:global} is a direct result of theorem \ref{theom:relax}, theorem \ref{theom:recover} and theorem \ref{theom:relax2} considering the optimal objective solution $f^{*}$ of the SOC-ACOPF model-\eqref{SOPF-P} is always a lower bound for the optimal objective solution $f^{*}_{0}$ of the o-ACOPF model-\eqref{NOPF}.
\end{proof}

%\addtocounter{theorem}{2}
\begin{theorem}\label{theom:mono}
Define the (parametric) optimal value functions:
\begin{align}
  &f^{*}(p_{d_{n}},q_{d_{n}}):={min}\; f(\Omega, p_{d_{n}},q_{d_{n}})\nonumber\\
  &\hspace{78pt} s.t.\; \left \{ \Omega \in \mathcal{R}_{ACOPF} \right \} \\
  &f^{*}(p_{d_{n}},q_{d_{n}}):={min}\; f(\Omega, p_{d_{n}},q_{d_{n}})\nonumber\\
  &\hspace{78pt} s.t.\; \left \{ \Omega \in \mathcal{R}_{SOC-ACOPF} \right \}
\end{align}
Where $\mathcal{R}_{ACOPF}$ and $\mathcal{R}_{SOC-ACOPF}$ are the feasible regions of the o-ACOPF model-\eqref{NOPF} and {the} SOC-ACOPF model-\eqref{SOPF-P} correspondingly.
%$f^{*}(p_{d_{n}},q_{d_{n}}):=\underset{\Omega \in \Re^{m}}{min}\;\; f \;\;s.t.\;\; \left \{ \Omega \in \Omega_{ACOPF} \right \}$ and $f^{*}(p_{d_{n}},q_{d_{n}}):=\underset{\Omega \in \Re^{m-n}}{min}\;\; f\;\; s.t.\;\; \left \{ \Omega \in \Omega_{SOC-ACOPF} \right \}$
If the objective function $f$ is a monotonically increasing function of $(p_n ,q_n)$  i.e. $f(p_{1,n}, q_{1,n}) \leq f(p_{2,n}, q_{2,n})$ for $(p_{1,n}, q_{1,n}) \leq
(p_{2,n} ,q_{2,n})$, the optimal value functions $f^{*}$ are monotonic i.e. $f^{*}(p_{d_{1,n}},q_{d_{1,n}})\leq f^{*}(p_{d_{2,n}},q_{d_{2,n}})$ for $(p_{d_{1,n}},q_{d_{1,n}})\leq (p_{d_{2,n}},q_{d_{2,n}})$ (assuming the o-ACOPF {model-}\eqref{NOPF} and {the} SOC-ACOPF model-\eqref{SOPF-P} are feasible for $(p_{d_{1,n}},q_{d_{1,n}})$ and $(p_{d_{2,n}},q_{d_{2,n}})$).
\end{theorem}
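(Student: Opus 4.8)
The plan is to handle both statements (for the o-ACOPF model-\eqref{NOPF} and the SOC-ACOPF model-\eqref{SOPF-P}) with a single construction: starting from an optimal solution at the larger load, I would build a feasible point at the smaller load whose objective value does not exceed the larger-load optimum. Fix $(p_{d_{1,n}},q_{d_{1,n}}) \le (p_{d_{2,n}},q_{d_{2,n}})$, both feasible, and let $\Omega^{*}_{2}$ be an optimal solution of the model at load $(p_{d_{2,n}},q_{d_{2,n}})$, so that $f(\Omega^{*}_{2}) = f^{*}(p_{d_{2,n}},q_{d_{2,n}})$. Define $\Omega_{1}$ to agree with $\Omega^{*}_{2}$ on every variable except the nodal injections, which I shift down by exactly the load decrements: $p_{1,n} := p^{*}_{2,n} - (p_{d_{2,n}} - p_{d_{1,n}})$ and $q_{1,n} := q^{*}_{2,n} - (q_{d_{2,n}} - q_{d_{1,n}})$ for every $n$.

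First I would verify that $\Omega_{1}$ is feasible at load $(p_{d_{1,n}},q_{d_{1,n}})$. The only constraints that reference $p_{n}$ or $q_{n}$ are the power-balance equations \eqref{eq:p_balance_chap1}--\eqref{eq:q_balance_chap1} and the injection bounds \eqref{eq:pbound}--\eqref{eq:qbound}. Every other constraint of the o-ACOPF model (\eqref{eq:VsVr_chap2}--\eqref{eq:qloss} together with \eqref{eq:vbound}--\eqref{eq:thetan_bound}) and of the SOC-ACOPF model (\eqref{eq:plosscone}--\eqref{eq:sinxprq}, with $K_{o_{l}}$ as in \eqref{eq:kol} depending only on $q_{s(r)_{l}}$ and $V_{s(r)_{l}}$) is written purely in branch-flow, loss, voltage and angle variables, all of which are unchanged, so those constraints still hold. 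For \eqref{eq:p_balance_chap1} at node $n$ the right-hand side is untouched while the left-hand side equals $p_{1,n}-p_{d_{1,n}} = p^{*}_{2,n}-p_{d_{2,n}}$, i.e. it is also unchanged, so the balance still holds; the same is true of \eqref{eq:q_balance_chap1}.

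Next comes the objective comparison. Because the load decrements are nonnegative, $(p_{1,n},q_{1,n}) \le (p^{*}_{2,n},q^{*}_{2,n})$ componentwise, so monotonicity of $f$ in the nodal generations gives $f(\Omega_{1}) \le f(\Omega^{*}_{2})$. Since $\Omega_{1}$ is feasible at the smaller load we have $f^{*}(p_{d_{1,n}},q_{d_{1,n}}) \le f(\Omega_{1})$, and chaining the two inequalities yields $f^{*}(p_{d_{1,n}},q_{d_{1,n}}) \le f^{*}(p_{d_{2,n}},q_{d_{2,n}})$, which is the assertion.

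The hard part will be the single feasibility check I deferred: the injection bounds \eqref{eq:pbound}--\eqref{eq:qbound}. The downward shift keeps the upper bounds intact ($p_{1,n} \le p^{*}_{2,n} \le p^{max}_{n}$, and likewise for $q$), but the lower bounds $p^{min}_{n}, q^{min}_{n}$ could be crossed at a node whose dispatch was already near its minimum at the larger load. I would close this gap either by invoking the regime the paper actually targets — increasing, sufficiently large loads served by dispatchable generation with downward headroom, so that the reduced dispatch stays feasible — or by restricting the load perturbation to buses carrying enough generation headroom. Pinning this down in full generality is the only delicate point; the rest of the argument is bookkeeping about which constraints actually see the injection variables.
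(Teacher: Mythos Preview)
Your construction is essentially the same as the paper's: keep all branch-flow, loss, voltage and angle variables from the optimal solution at the larger load, and shift each nodal injection down by the corresponding load decrement so that the power-balance equations \eqref{eq:p_balance_chap1}--\eqref{eq:q_balance_chap1} are preserved. The paper wraps this construction in a proof by contradiction (assume $f^{*}_{1}>f^{*}_{2}$, then build $\Omega'_{1}$ and derive $f'_{1}\le f^{*}_{2}<f^{*}_{1}$), whereas you argue directly; the underlying feasibility and monotonicity steps are identical.

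On the gap you flag concerning the lower bounds \eqref{eq:pbound}--\eqref{eq:qbound}: the paper's proof does not address this point either --- it simply asserts that the shifted point is feasible without checking $p^{min}_{n},q^{min}_{n}$. So your honest caveat is not a deficiency relative to the paper; if anything, you have located an implicit assumption that the paper leaves unstated.
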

\begin{proof}
We prove theorem \ref{theom:mono} by contradiction. Suppose the optimal solutions and objectives of the SOC-ACOPF {model-}\eqref{SOPF-P} (or {the} o-ACOPF {model-}\eqref{NOPF}) at $(p_{d_{1,n}},q_{d_{1,n}})$ and $(p_{d_{2,n}},q_{d_{2,n}})$ are $\Omega^{*}_{1}=\{p^{*}_{1,n},q^{*}_{1,n},p^{*}_{1,s_{l}},q^{*}_{1,s_{l}},p^{*}_{1,o_{l}},q^{*}_{1,o_{l}},V^{*}_{1,n},\theta^{*}_{1,l}\} \in \Re^{m-\left | \mathcal{N} \right |}, f^{*}_{1}$ and $\Omega^{*}_{2}=\{p^{*}_{2,n},q^{*}_{2,n},p^{*}_{2,s_{l}},q^{*}_{2,s_{l}},p^{*}_{2,o_{l}},$ $q^{*}_{2,o_{l}},V^{*}_{2,n},\theta^{*}_{2,l}\} \in \Re^{m-\left | \mathcal{N} \right |}, f^{*}_{2}$. If $f^{*}_{1}>f^{*}_{2}$ for $(p_{d_{1,n}},q_{d_{1,n}})\leq (p_{d_{2,n}},q_{d_{2,n}})$, since $f$ is monotonic, there must be at least one $\hat{n} \in N$ such that $(p^{*}_{1,\hat{n}},q^{*}_{1,\hat{n}})>(p^{*}_{2,\hat{n}},q^{*}_{2,\hat{n}})$. We can construct a feasible solution $\Omega^{'}_{1}$ of the SOC-ACOPF {model-}\eqref{SOPF-P} (or {the} o-ACOPF {model-}\eqref{NOPF}) at $(p_{d_{1,n}},q_{d_{1,n}})$ (by using $\Omega^{*}_{2}$) as:
\begin{align}
  \left \{ \Omega^{'}_{1}\setminus (p^{'}_{1,\hat{n}},q^{'}_{1,\hat{n}}) \right \} :=\left \{ \Omega^{*}_{2}\setminus (p^{*}_{2,\hat{n}},q^{*}_{2,\hat{n}})\right \}
\end{align}
From \eqref{eq:p_balance_chap1}-\eqref{eq:q_balance_chap1}, we have:
\begin{align}
  &p^{*}_{2,\hat{n}}-p_{d_{2,\hat{n}}}=\sum_{l}({A^{+}_{\hat{n}l}}p^{*}_{s_{2,l}}-{A^{-}_{\hat{n}l}}p^{*}_{o_{2,l}})+G_{\hat{n}}V^{*}_{2,\hat{n}} \label{eq:p_balance3}\\
  &q^{*}_{2,\hat{n}}-q_{d_{2,\hat{n}}}=\sum_{l}({A^{+}_{\hat{\hat{n}}l}}q^{*}_{s_{2,l}}-{A^{-}_{\hat{n}l}}q^{*}_{o_{2,l}})-B_{\hat{n}}V^{*}_{2,\hat{n}}\label{eq:q_balance3}
\end{align}
We know the feasible solution of $(p^{'}_{1,\hat{n}},q^{'}_{1,\hat{n}})$ must satisfy: 
\begin{align}
  &p^{'}_{1,\hat{n}}-p_{d_{1,\hat{n}}}=\sum_{l}({A^{+}_{\hat{n}l}}p^{*}_{s_{2,l}}-{A^{-}_{\hat{n}l}}p^{*}_{o_{2,l}})+G_{\hat{n}}V^{*}_{2,\hat{n}}\label{eq:p_balance4}\\
  &q^{'}_{1,\hat{n}}-q_{d_{1,\hat{n}}}=\sum_{l}({A^{+}_{\hat{n}l}}q^{*}_{s_{2,l}}-{A^{-}_{\hat{n}l}}q^{*}_{o_{2,l}})-B_{\hat{n}}V^{*}_{2,\hat{n}}\label{eq:q_balance4}
\end{align}
Substituting \eqref{eq:p_balance4}-\eqref{eq:q_balance4} to \eqref{eq:p_balance3}-\eqref{eq:q_balance3}, we have:
\begin{align}
  &p^{'}_{1,\hat{n}}-p_{d_{1,\hat{n}}}=p^{*}_{2,\hat{n}}-p_{d_{2,\hat{n}}}\\
  &q^{'}_{1,\hat{n}}-q_{d_{1,\hat{n}}}=q^{*}_{2,\hat{n}}-q_{d_{2,\hat{n}}}
\end{align}
Which yield:
\begin{align}
  &p^{'}_{1,\hat{n}}=p^{*}_{2,\hat{n}}+p_{d_{1,\hat{n}}}-p_{d_{2,\hat{n}}}\label{eq:p_balance5}\\
  &q^{'}_{1,\hat{n}}=q^{*}_{2,\hat{n}}+q_{d_{1,\hat{n}}}-q_{d_{2,\hat{n}}}\label{eq:q_balance5}
\end{align}
So $\Omega^{'}_{1}=\left \{ \Omega^{*}_{2}\setminus (p^{*}_{2,\hat{n}},q^{*}_{2,\hat{n}})\right \}\cup (p^{'}_{1,\hat{n}},q^{'}_{1,\hat{n}})$ is feasible for the SOC-ACOPF {model-}\eqref{SOPF-P} (or {the} o-ACOPF {model-}\eqref{NOPF}). Because $(p_{d_{1,n}},q_{d_{1,n}})\leq (p_{d_{2,n}},q_{d_{2,n}})$, equations \eqref{eq:p_balance5}-\eqref{eq:q_balance5} imply $(p^{'}_{1,\hat{n}},q^{'}_{1,\hat{n}})\leq(p^{*}_{2,\hat{n}},q^{*}_{2,\hat{n}})$. This means the corresponding objective function values satisfy $f^{'}_{1}\leq f^{*}_{2}<f^{*}_{1}$ (note $(p^{'}_{1,n},q^{'}_{1,n})=(p^{*}_{2,n},q^{*}_{2,n}),\; \forall n\neq \hat{n}$) which contradicts the assumption that $f^{*}_{1}$ is the optimal objective solution.  So $f^{*}_{1}>f^{*}_{2}$ cannot hold for $(p_{d_{1,n}},q_{d_{1,n}})\leq (p_{d_{2,n}},q_{d_{2,n}})$. This completes the proof.
\end{proof}

\begin{theorem}\label{theom:pd}
Assuming $(\theta^{min}_l,\theta^{max}_l) \subseteq (-\frac{\pi}{2}, \frac{\pi}{2})$ and the objective function $f$ is monotonically increasing for $(p_{n},q_{n})$, if the optimal solution $\Omega^{*}$ of the SOC-ACOPF model-\eqref{SOPF-P} (with the additional constraint \eqref{eq:necerecover}) at $(p_{d_{n}},q_{d_{n}})$ has positive relaxation gap i.e. ${q}^{*}_{o_{\hat{l}}} > \frac{ p^{*2}_{s(r)_{\hat{l}}}+q^{*2}_{s(r)_{\hat{l}}}}{V^{*}_{s(r)_{\hat{l}}}} X_{\hat{l}}$ or ${p}^{*}_{o_{\hat{l}}} > \frac{ p^{*2}_{s(r)_{\hat{l}}}+q^{*2}_{s(r)_{\hat{l}}}}{V^{*}_{s(r)_{\hat{l}}}} R_{\hat{l}}$ for some $\hat{l} \in \mathcal{L}$, there exists $(p^{'}_{d_{n}},q^{'}_{d_{n}})>(p_{d_{n}},q_{d_{n}})$ at which the relaxation is tight for the optimal solution i.e. ${q}^{*}_{o_{l}} = \frac{ p^{*2}_{s(r)_{l}}+q^{*2}_{s(r)_{l}}}{V^{*}_{s(r)_{l}}} X_{l},\; \forall l \in \mathcal{L}$ or ${p}^{*}_{o_{\hat{l}}} = \frac{ p^{*2}_{s(r)_{\hat{l}}}+q^{*2}_{s(r)_{\hat{l}}}}{V^{*}_{s(r)_{\hat{l}}}} R_{\hat{l}},\; \forall l \in \mathcal{L}$.
\end{theorem}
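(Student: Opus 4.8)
The plan is to read a strictly positive relaxation gap economically. By \eqref{eq:pqloss} the active and reactive gaps vanish together, and a positive gap at $\hat{l}$ means the optimal dispatch is producing active power only to burn it in the fictitious loss term $q^{*}_{o_{\hat{l}}}-\frac{p^{*2}_{s(r)_{\hat{l}}}+q^{*2}_{s(r)_{\hat{l}}}}{V^{*}_{s(r)_{\hat{l}}}}X_{\hat{l}}>0$ rather than to serve demand; since $f$ is increasing in the generations, this is optimal only because the load $(p_{d_n},q_{d_n})$ is too small to absorb that power under the remaining constraints. I would show that the excess can be converted into genuine load, yielding a feasible point of the SOC-ACOPF model-\eqref{SOPF-P} at a strictly larger load, and then iterate the conversion and pass to the limit.

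The basic conversion step: let $\hat{l}$ have sending node $k$ and receiving node $m$. Starting from $\Omega^{*}$, decrease $q_{o_{\hat{l}}}$ (and $p_{o_{\hat{l}}}$ in proportion, so \eqref{eq:pqloss} still holds) by a small amount $\epsilon>0$, which shrinks the relaxation gap on $\hat{l}$; simultaneously decrease $q_{s_{\hat{l}}}$ by $\epsilon(R_{\hat{l}}^{2}+X_{\hat{l}}^{2})/(2X_{\hat{l}}^{2})$ and leave $p_{s_{\hat{l}}}$ unchanged, which is exactly the amount that keeps the right-hand side of the voltage-drop equation \eqref{eq:VsVr_chap2} for $\hat{l}$ unchanged, so that all voltage-magnitude-square variables — hence every constraint of every other branch — stay untouched, and $\theta_{\hat{l}}=X_{\hat{l}}p_{s_{\hat{l}}}-R_{\hat{l}}q_{s_{\hat{l}}}$ moves by only $O(\epsilon)$, remaining inside $(\theta^{min}_{\hat{l}},\theta^{max}_{\hat{l}})$ and compatible with \eqref{eq:necerecover}. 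These changes unbalance only nodes $k$ and $m$: at $k$ the reactive balance \eqref{eq:q_balance_chap1} is restored by lowering $q_{k}$ or raising $q_{d_{k}}$ while the active balance is untouched; at $m$ the right-hand sides of \eqref{eq:p_balance_chap1}--\eqref{eq:q_balance_chap1} both decrease, so balance is restored by raising $p_{d_{m}}$ and $q_{d_{m}}$ (for $R_{\hat{l}}<X_{\hat{l}}$, and after swapping the roles of $p$ and $q$ otherwise), producing a load $(p'_{d_n},q'_{d_n})>(p_{d_n},q_{d_n})$. The resulting $\Omega'$ is feasible for the SOC-ACOPF model-\eqref{SOPF-P} at this larger load, has strictly smaller total relaxation gap, and satisfies $f(\Omega')\le f(\Omega^{*})$ because no generation was increased.

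To reach the statement for the \emph{optimal} solution, iterate: if the optimal solution at the enlarged load still has a positive gap, repeat the step, producing a non-decreasing sequence of load vectors. By Theorem~\ref{theom:mono} the optimal values are non-decreasing; and summing \eqref{eq:p_balance_chap1} over $n$ shows total generation equals total load plus total losses plus total shunt consumption, so with the generation upper bounds \eqref{eq:pbound} the feasible loads lie in a bounded set and the sequence converges. Each step raises the load by an amount comparable to the gap it removes, so the gap tends to zero along the sequence; extracting a convergent subsequence of optimal solutions and using closedness of the constraint set together with the (semi)continuity of the parametric optimal-value function gives, at a limit load $(p'_{d_n},q'_{d_n})>(p_{d_n},q_{d_n})$, an optimal solution with zero relaxation gap, i.e.\ ${q}^{*}_{o_{l}}=\frac{p^{*2}_{s(r)_{l}}+q^{*2}_{s(r)_{l}}}{V^{*}_{s(r)_{l}}}X_{l}$ for all $l\in\mathcal{L}$ (equivalently for the active losses, by \eqref{eq:pqloss}).

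The step I expect to be the main obstacle is the passage to the limit together with the guarantee that the iterated perturbations never leave the remaining box constraints — the voltage bounds \eqref{eq:vbound}, the phase-angle bounds \eqref{eq:thetal_bound}, the ampacity/cone constraints on the other branches, and \eqref{eq:necerecover}: a single large conversion could push some quantity out of its open interval, so the conversion must be carried out differentially with each increment kept strictly inside the slack available at $\Omega^{*}$, and the limiting argument (convergence of optimal solutions, continuity of $f^{*}$) needs care because the generation upper bounds may become active along the way. The per-step algebra, by contrast, is routine.
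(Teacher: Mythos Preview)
Your economic reading of the gap and the idea of converting the excess into additional load are both right, but the specific perturbation you propose does not stay feasible, and the iteration/limit machinery is unnecessary. When you decrease $q_{s_{\hat l}}$ with $p_{s_{\hat l}}$ fixed, constraint \eqref{eq:sinxprq} forces $\theta_{\hat l}$ to change; then \eqref{eq:def_thetal} forces a nodal angle $\theta_n$ at an endpoint of $\hat l$ to move, which shifts $\theta_{l'}$ on every other branch $l'$ incident to that node and violates \eqref{eq:sinxprq} there unless you also perturb $(p_{s_{l'}},q_{s_{l'}})$. In a meshed network this angle perturbation cannot be localised, so the claim that ``every constraint of every other branch stays untouched'' because the $V_n$ are fixed is false: the voltage-magnitude squares are not the only coupling.

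The paper sidesteps this by introducing an auxiliary nonconvex model, r-ACOPF, in which \eqref{eq:VsVr_chap2} is replaced by $V_{s_l}-v_{s_l}v_{r_l}\cos\theta_l=p_{s_l}R_l+q_{s_l}X_l$ together with \eqref{eq:vsvrsin}; neither of these involves $p_{o_l}$ or $q_{o_l}$, so in r-ACOPF the loss variables appear \emph{only} in the nodal balances \eqref{eq:p_balance_chap1}--\eqref{eq:q_balance_chap1} besides the cone \eqref{eq:plosscone}--\eqref{eq:pqloss}. One can then set every $p_{o_l},q_{o_l}$ to its tight value in a single step, keep $(p_n,q_n,p_{s_l},q_{s_l},V_n,\theta_l,\theta_n)$ all fixed, and raise the receiving-end loads by exactly the gap (since $A^{-}_{nl}\in\{0,-1\}$) to restore balance. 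The generations are unchanged, so the objective value is unchanged; Theorem~\ref{theom:mono} then forces this value to coincide with $f^{*}$ at the enlarged load, so the constructed tight point is itself optimal there. The equivalence (on the tight set) between r-ACOPF and the SOC-ACOPF model-\eqref{SOPF-P} with \eqref{eq:necerecover} transfers the conclusion back. No iteration, no limit, no box-constraint bookkeeping.
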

\begin{proof}
To prove theorem \ref{theom:pd}, we firstly derive an equivalent model of the o-ACOPF model-\eqref{NOPF} by replacing constraint \eqref{eq:VsVr_chap2} with:
\begin{align}
V_{s_{l}}-v_{s_{l}}v_{r_{l}}cos \theta_l=p_{s_{l}}R_{l}+q_{s_{l}}X_{l},\; \forall l\in \mathcal{L}\label{eq:vsvrcos}
\end{align}
Constraints \eqref{eq:vsvrcos}-and-\eqref{eq:vsvrsin} are equivalent with constraints \eqref{eq:VsVr_chap2}-and-\eqref{eq:vsvrsin} for $(\theta^{min}_l,\theta^{max}_l) \subseteq (-\frac{\pi}{2}, \frac{\pi}{2})$ since they are expressing the same voltage drop phasor either in the way of real-and-imaginary parts or amplitude-and-imaginary parts illustrated in Fig. \ref{fig:v_drop_phasor}. 
\begin{figure}[!htbp]
\vspace{-0.5cm}
\hspace*{-0.9cm}
\centering
\includegraphics[width=10cm]{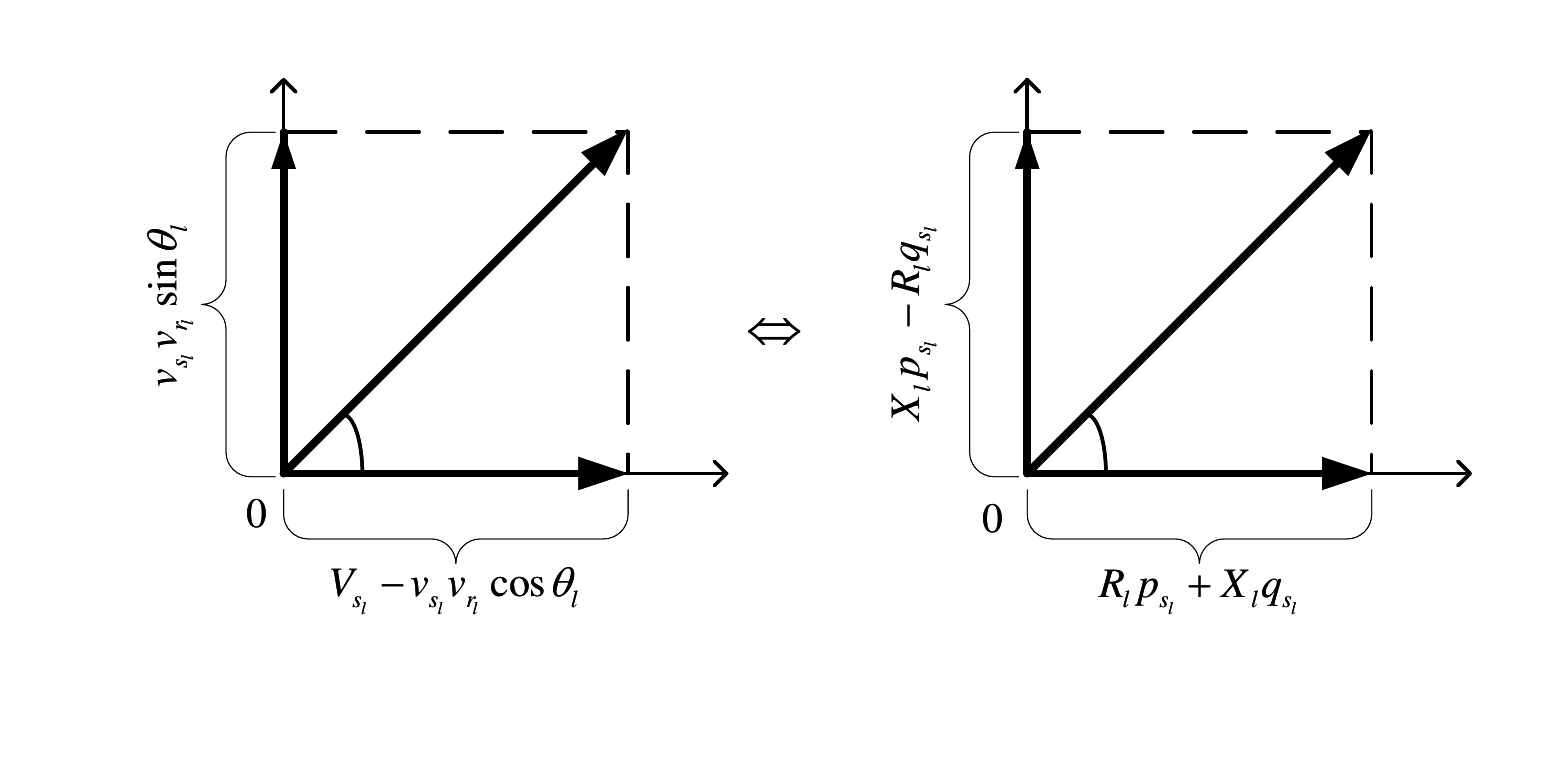}
\vspace{-1.5cm}
\caption{Equivalent Constraint of the Branch Voltage Drop Phasor}\label{fig:v_drop_phasor}
\end{figure}
We then define a new relaxed ACOPF model as r-ACOPF in \eqref{r-ACOPF}.
\begin{align}\label{r-ACOPF}
    &\underset{\Omega}{\text{Minimize}} {\hspace{10pt}}  f(\Omega)\\
    &\text{subject to} {\hspace{10pt}} \eqref{eq:p_balance_chap1}-\eqref{eq:q_balance_chap1}, \eqref{eq:vsvrsin},\eqref{eq:veq},\eqref{eq:vbound}-\eqref{eq:qbound} \nonumber\\
    &{\hspace{62pt}} \eqref{eq:plosscone}-\eqref{eq:pqloss}, \eqref{eq:vsvrcos}\nonumber
\end{align}
Obviously, when the relaxation is tight, the feasible region of the r-ACOPF model-\eqref{r-ACOPF} is actually equivalent to the feasible region of the SOC-ACOPF model-\eqref{SOPF-P} (with the additional constraint \eqref{eq:necerecover}) since any feasible solution from either model (SOC-ACOPF {model-}\eqref{SOPF-P} or r-ACOPF {model-}\eqref{r-ACOPF}) can be mapped to the feasible region of another model (SOC-ACOPF {model-}\eqref{SOPF-P} or r-ACOPF {model-}\eqref{r-ACOPF}) using the procedures we derived in theorems \ref{theom:relax}-\ref{theom:global}. The feasible region comparison is shown in Fig. \ref{fig:compare_region}.
\begin{figure}[!htbp]
\vspace{-0.7cm}
%\hspace*{-0.9cm}
\centering
\includegraphics[width=9.5cm]{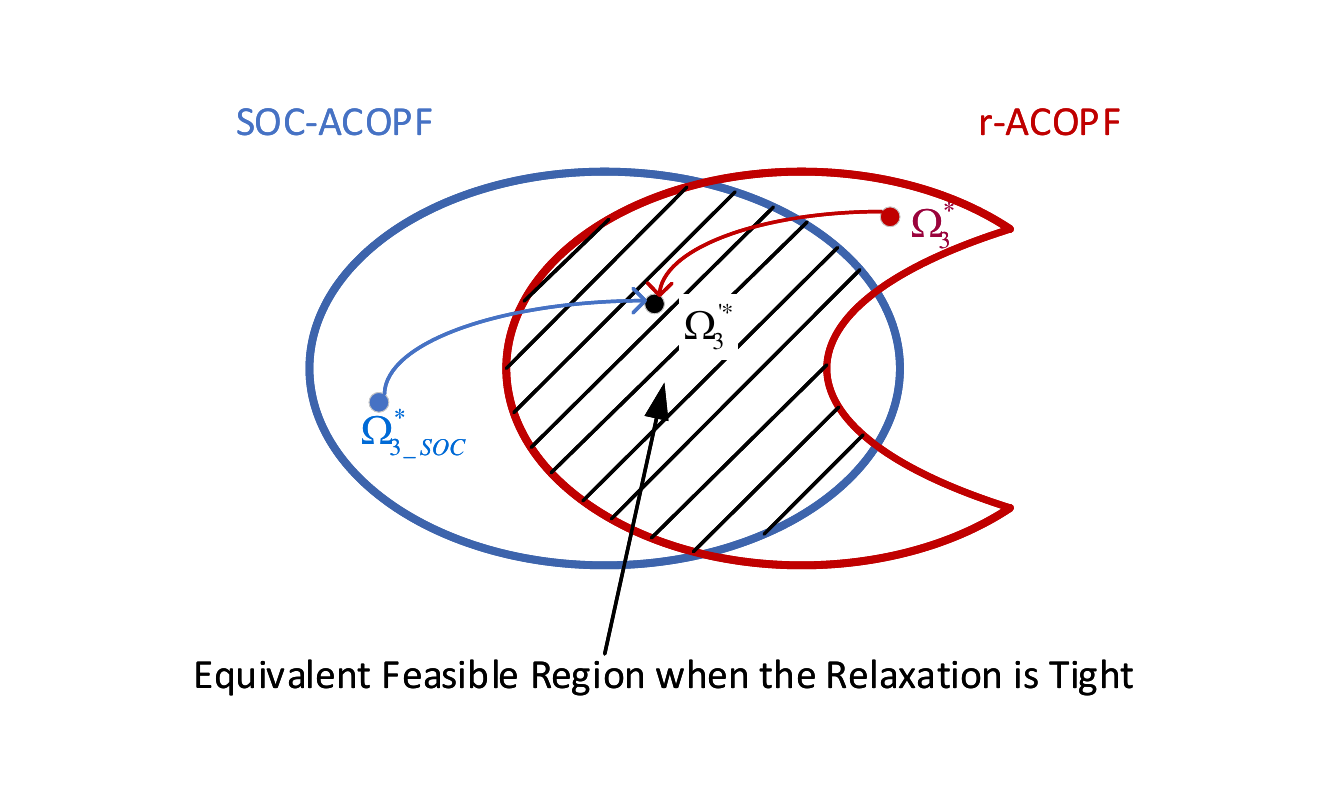}
\vspace{-1.2cm}
\caption{Comparison of the Feasible Regions of SOC-ACOPF and r-ACOPF}\label{fig:compare_region}
\end{figure}

We want to emphasize here that, although the r-ACOPF model-\eqref{r-ACOPF} is nonconvex (with regarding to its own set of variables), it is valid that part of its feasible region can be mapped to a convex one (with another set of variables) such as the convex SOC-ACOPF model-\eqref{SOPF-P} (with additional constraint \eqref{eq:necerecover}). An obvious simple example is the equivalence between the nonconvex region expressed by $\left\{ y=x^2,\, x>0 \right\}$ for $(y,x)\in \Re$ and convex region expressed by $\left \{ y=z,\, z>0 \right \}$ for $(y,z) \in \Re$. Fig. \ref{fig:equiva_region} shows this equivalence graphically. A similar consideration can be drawn for the r-ACOPF model-\eqref{r-ACOPF} (where both $V_n=v^{2}_n$ and $v_n$ are deployed) and the SOC-ACOPF model-\eqref{SOPF-P} where only $V_{n}$ are used.
\begin{figure}[!htbp]
%\vspace{-0.6cm}
%\hspace*{-0.3cm}
\centering
\includegraphics[width=8cm]{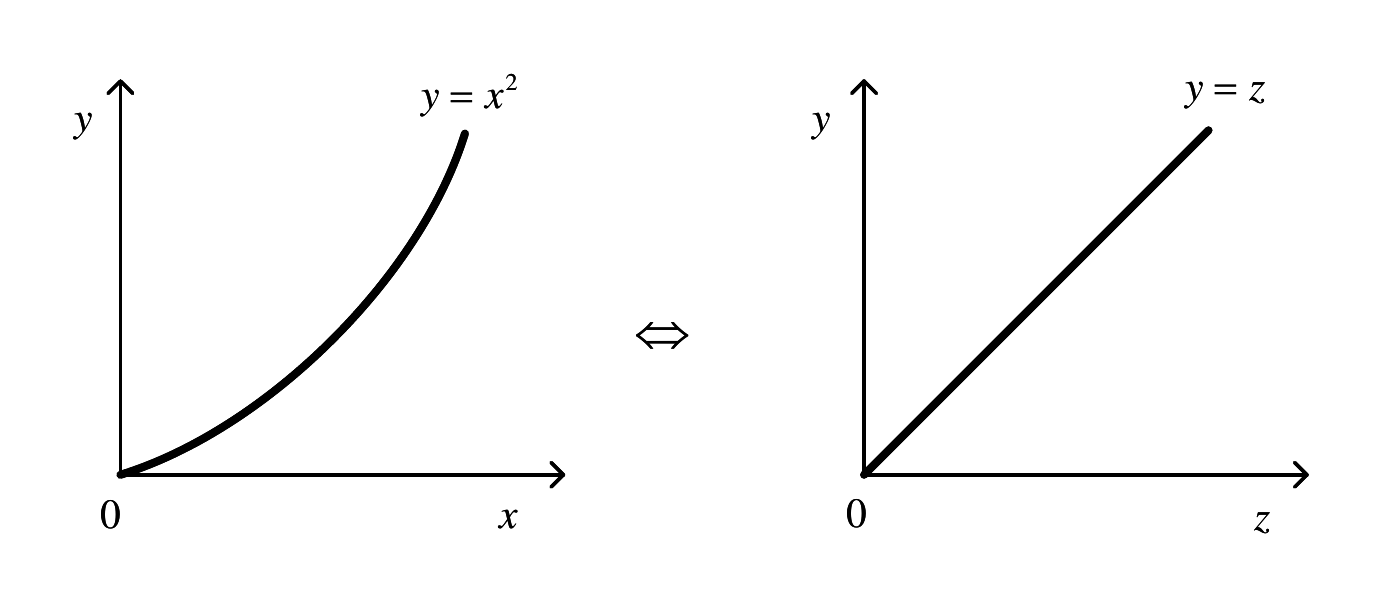}
\vspace{-0.6cm}
\caption{The Equivalence of Regions Expressed by Different Variables}\label{fig:equiva_region}
\end{figure}

Similarly, we define the (parametric) optimal value function of the r-ACOPF model-\eqref{r-ACOPF} (with regarding to $(p_{d_{n}},q_{d_{n}})$) as:
\begin{align}
  f^{*}(p_{d_{n}},q_{d_{n}}):=&{min}\; f(\Omega,p_{d_{n}},q_{d_{n}}) \nonumber\\
  &s.t.\;\; \left \{ \Omega \in \Omega_{r-ACOPF} \right \} 
\end{align}
Following the same procedure in the proof of theorem \ref{theom:mono}, it is obvious that $f^{*}$ here is also monotonic.

We now use the r-ACOPF model-\eqref{r-ACOPF} as a bridge to prove theorem \ref{theom:pd}. The key procedure is to show that theorem \ref{theom:pd} is valid for the r-ACOPF model-\eqref{r-ACOPF} and thus valid for the SOC-ACOPF model-\eqref{SOPF-P} considering the equivalence between their feasible regions (when the relaxation is tight). Suppose the optimal solution $\Omega^{*}_{3}$ of the r-ACOPF model-\eqref{r-ACOPF} has positive relaxation gap ${q}^{*}_{o_{3,\hat{l}}} > \frac{ p^{*2}_{3,s(r)_{\hat{l}}}+q^{*2}_{3,s(r)_{\hat{l}}}}{V^{*}_{3,s(r)_{\hat{l}}}} X_{\hat{l}}$ or ${p}^{*}_{o_{3,\hat{l}}} > \frac{ p^{*2}_{3,s(r)_{\hat{l}}}+q^{*2}_{3,s(r)_{\hat{l}}}}{V^{*}_{3,s(r)_{\hat{l}}}} R_{\hat{l}}$ for some $\hat{l} \in \mathcal{L}$ at $(p_{d_{n}},q_{d_{n}})$, we use $\Delta^{*}p_{o_{3,\hat{l}}}>0$ and $\Delta^{*}q_{o_{3,\hat{l}}}>0$ to represent the active and reactive power losses relaxation gaps respectively:
\begin{align}
&{p}^{*}_{o_{3,\hat{l}}} - \frac{ p^{*2}_{3,s(r)_{\hat{l}}}+q^{*2}_{3,s(r)_{\hat{l}}}}{V^{*}_{3,s(r)_{\hat{l}}}} R_{\hat{l}}=\Delta^{*}p_{o_{3,\hat{l}}}\label{plossgap} \\
&{q}^{*}_{o_{3,\hat{l}}} - \frac{ p^{*2}_{3,s(r)_{\hat{l}}}+q^{*2}_{3,s(r)_{\hat{l}}}}{V^{*}_{3,s(r)_{\hat{l}}}} X_{\hat{l}}=\Delta^{*}q_{o_{3,\hat{l}}}\label{qlossgap}
\end{align}
So the tight power losses solutions are ${p}^{'*}_{o_{3,\hat{l}}}={p}^{*}_{o_{3,\hat{l}}}-\Delta^{*}p_{o_{3,\hat{l}}}>0$ and ${q}^{'*}_{o_{3,\hat{l}}}={q}^{*}_{o_{3,\hat{l}}}-\Delta^{*}q_{o_{3,\hat{l}}}>0$. From constraints \eqref{eq:p_balance_chap1}-\eqref{eq:q_balance_chap1}, we have:
%\vspace{-0.5cm}
\begin{align}
&p^{*}_{3,n}-p_{d_{n}}=\sum_{l}({A^{+}_{nl}}p^{*}_{s_{3,l}}-{A^{-}_{nl}}p^{*}_{o_{3,l}})+G_{n}V^{*}_{3,n} \label{eq:p_balance6}
\end{align}
\begin{align}
&q^{*}_{3,n}-q_{d_{n}}=\sum_{l}({A^{+}_{nl}}q^{*}_{s_{3,l}}-{A^{-}_{nl}}q^{*}_{o_{3,l}})-B_{n}V^{*}_{3,n}\label{eq:q_balance6}
\end{align}
Suppose at $(p^{'}_{d_{n}},q^{'}_{d_{n}})$, tight power losses solutions are found (note in the r-ACOPF model-\eqref{r-ACOPF} only {the} following constraints are associated with the power losses variables aside from constraints \eqref{eq:plosscone}-\eqref{eq:pqloss}):
\begin{align}
&p^{*}_{3,n}-p^{'}_{d_{n}}=\sum_{l}({A^{+}_{nl}}p^{*}_{s_{3,l}}-{A^{-}_{nl}}p^{'*}_{o_{3,l}})+G_{n}V^{*}_{3,n} \label{eq:p_balance7}\\
&q^{*}_{3,n}-q^{'}_{d_{n}}=\sum_{l}({A^{+}_{nl}}q^{*}_{s_{3,l}}-{A^{-}_{nl}}q^{'*}_{o_{3,l}})-B_{n}V^{*}_{3,n}\label{eq:q_balance7}
\end{align}
Combining equations \eqref{eq:p_balance6}-\eqref{eq:q_balance6} with equations \eqref{eq:p_balance7}-\eqref{eq:q_balance7}, we have:
\begin{align}
&p^{'}_{d_{n}}=p_{d_{n}}-\sum_{l}{A^{-}_{nl}}\Delta^{*}p^{*}_{o_{3,l}} \label{eq:pdr}\\
&q^{'}_{d_{n}}=q_{d_{n}}-\sum_{l}{A^{-}_{nl}}\Delta^{*}q^{*}_{o_{3,l}} \label{eq:qdr}
\end{align}
Because ${A^{-}_{nl}}\in \left\{0,\, -1\right\}$ and $(\Delta^{*}p^{*}_{o_{3,l}},\Delta^{*}q^{*}_{o_{3,l}})>(0,0)$,  equations \eqref{eq:pdr}-\eqref{eq:qdr} imply $(p^{'}_{d_{n}},q^{'}_{d_{n}})>(p_{d_{n}},q_{d_{n}})$. Taking into account that the optimal value function $f^{*}$ is monotonic, we also know that $f^{*}(p_{d_{n}},q_{d_{n}})<f^{*}(p^{'}_{d_{n}},q^{'}_{d_{n}})$ which means $\Omega^{*}$ is not only feasible but also optimal for the r-ACOPF model-\eqref{r-ACOPF} at $(p^{'}_{d_{n}},q^{'}_{d_{n}})$. As we have explained at the beginning of this proof, since the r-ACOPF model-\eqref{r-ACOPF} is actually equivalent (in terms of the feasible region and optimal solution when the relaxation is tight) with the SOC-ACOPF model-\eqref{SOPF-P}, all the tight solutions we derived in proving theorem \ref{theom:pd} for the r-ACOPF model-\eqref{r-ACOPF} can always be mapped to the corresponding tight solution of the SOC-ACOPF model-\eqref{SOPF-P}. In other words, theorem \ref{theom:pd} is also valid for the SOC-ACOPF model-\eqref{SOPF-P}.
\end{proof}
Theorem \ref{theom:pd} shows the structure of the relaxed solutions and tight solutions for the SOC-ACOPF model-\eqref{SOPF-P} with regarding to the power load parameter. This theorem shows a counter-intuitive property of the SOC-ACOPF model-\eqref{SOPF-P} that larger power load can help tighten the relaxation gap. We present in the next Section the numerical validations of all the theorems and analysis in this Section.
\end{subequations}
\begin{table*}[!htbp]
%\vspace{-0.1cm}
  \centering
  \caption{Objective Solutions}
  \resizebox{\textwidth}{!}{%
    \begin{tabular}{|c|c|c|c|c|c|c|c|c|}\hline
    \multirow{2}[0]{*}{Test case} & \multicolumn{2}{c|}{10\% Load} & \multicolumn{2}{c|}{20\% Load} & \multicolumn{2}{c|}{30\% Load} & \multicolumn{2}{c|}{40\% Load} \\\cline{2-9}
          & SOC-ACOPF & o-ACOPF & SOC-ACOPF & o-ACOPF & SOC-ACOPF & o-ACOPF & SOC-ACOPF & o-ACOPF \\\hline
    {case4} &2.40107  &2.40107  &4.80428  &4.80428  &7.20965  &7.20965  &9.6171  &9.6171  \\\hline
    case9 &1170.74  &1170.75  &1347.23  &1347.23  &1593.64  &1593.64  &1909.78  &1909.78  \\\hline
    {13feeder} &0.00745  &0.00745  &0.01493  &0.01493  &0.02245  &0.02245  &0.03002  &0.03002  \\\hline
    IEEE14 & 545.64 & 546.37 & 1147.21 & 1147.50 & 1806.10 & 1806.26 & 2523.77 & 2523.91 \\\hline
    %case9 &  &  &  &  &  &  &  &  \\\hline
    case30 & 33.14 & 33.14 & 75.31 & 75.31 & 123.61 & 123.61 & 178.12 & 178.12 \\\hline
    {34feeder} &0.00211  &0.00211  &0.00426  &0.00426 &0.00645  &0.00645  &0.00868 &0.00868  \\\hline
    IEEE57 & 2682.55 & 2686.41 & 5706.04 & 5709.00 & 9080.48 & 9082.93 & 12809.00 & 12810.65 \\\hline
    IEEE118 & 8940.49 & 8952.62 & 18735.71 & 18750.11 & 29420.72 & 29436.19 & 41008.27 & 41025.36 \\\hline
    ACTIVSg200 & 14070.44  & 14070.44  & 14070.44  & 14070.44  & 14070.44  & 14070.44  & 14483.45 & 14483.82 \\\hline
    IEEE300 & 51210.16 & 56915.23 & 107284.01 & 108378.18 & 168588.72 & 168712.09 & 235157.51 & 235244.97  \\\hline
    ACTIVSg500 & 16386.94  & 16386.94  & 16386.94 & 16386.94  & 16386.94  & 16386.94  & 16386.94  & 16386.94 \\\hline
    1354pegase & 7558.35 & 7558.47 & 15101.85 & 15102.06 & 22665.28 & 22665.88 & 30246.88 & 30249.40 \\\hline
    ACTIVSg2000 &301722.90   &301722.90   &319936.30   &321123.46 &401882.50   &402289.24   &509723.50  &510082.88  \\\hline
    2383wp     &0.00     &0.00     &0.00  &0.00     &19377.99 &19477.73    &175742.10     &175881.88 \\\hline
    2736sp     &0.00     &0.00     &0.00  &0.00     &108775.80   &108838.81      &250447.80   &250617.91 \\\hline
    2737sop     &0.00    &0.00     &5.86  &5.86    &42096.36    &42110.97      &128947.20   &128979.20 \\\hline
    2869pegase & 14639.05 & 14754.46 & 29204.10 & 29236.82 & 43811.78 & 43830.34 & 58458.84 & 58467.05\\\hline
    3012wp &0.00  &0.00   &0.00  &0.00  &41101.36  &41261.04  &338807.19  &339179.85  \\\hline
{3120sp} &1336861.76  &1336861.82  &1336861.76  &1336861.82  &1336861.76  &1336861.82  &1336861.76  &1336861.82\\\hline
    {3375wp} &6418725.22  &6418725.36  &6418725.22  &6418725.37  &6418725.22  &6418725.36  &6418725.22  &6418725.36  \\\hline

    \end{tabular}}
  \label{tab:objective}%
  %\vspace{-0.3cm}
\end{table*}%
\begin{table*}[!htbp]
  \centering
  \caption{Maximum Relaxation Gaps of Active Power Losses from the SOC-ACOPF Model}
  \resizebox{\textwidth}{!}{%
    \begin{tabular}{|c|c|c|c|c|c|c|c|c|}\hline
    Test case &{10\% Load} &{$>$10\% Load} & {20\% Load} &{$>$20\% Load} & {30\% Load} &{$>$30\% Load} &{40\% Load} &{$>$40\% Load} \\\hline
    {case4} &9.75E-11  &8.46E-10  &1.00E-10  &8.27E-10  &1.00E-10  &6.82E-10  &9.85E-11  &8.51E-10  \\\hline
    case9 &9.82E-11 & 1.59E-16 & 5.65E-11 & 3.60E-13 & 1.16E-10 & 2.95E-13 & 9.81E-11 & 3.02E-12 \\\hline
    {13feeder} &1.24E-7  &6.42E-9  &1.24E-7  &6.43E-9  &1.23E-7  &6.38E-9  &1.22E-7  &6.19E-9  \\\hline
    IEEE14 & 5.43E-10 & 7.31E-13 & 3.35E-09 & 6.89E-13 & 1.06E-10 & 6.85E-12 & 2.99E-09 & 1.45E-12  \\\hline
    case30 & 5.45E-10 & 6.47E-10 & 1.67E-10 & 1.33E-11 & 1.71E-09 & 1.72E-10 & 1.21E-10 & 1.84E-10 \\\hline
    {34feeder} &1.25E-7  &6.98E-9  &1.25E-7  &6.82E-9  &1.25E-7  &6.68E-9  &1.25E-7  &2.40E-9  \\\hline
    IEEE57 & 1.84E-10 & 5.92E-10 & 2.52E-11 & 7.46E-10 & 1.36E-09 & 2.09E-10 & 6.38E-09 & 8.26E-10  \\\hline
    IEEE118 & 4.93E-09 & 3.28E-09 & 6.02E-08 & 1.79E-09 & 1.11E-07 & 1.10E-09 & 1.57E-09 & 3.99E-11  \\\hline
    ACTIVSg200 & 1.67E-02 & 1.28E-10 & 1.43E-02 & 3.37E-10 & 6.65E-03 & 6.97E-10 & 1.45E-08 & 5.79E-10 \\\hline
    IEEE300 & 1.15E-02 & 4.14E-08 & 1.37E-03 & 8.17E-09 & 4.08E-06 & 2.52E-09 & 5.01E-05 & 1.10E-09  \\\hline
    ACTIVSg500 & 2.29E-02 & 1.34E-09 & 2.12E-02 & 3.69E-09 & 1.93E-02 & 5.49E-10 & 1.60E-02 & 3.59E-10 \\\hline
    1354pegase & 3.09E-03 & 1.79E-07 & 2.72E-07 & 1.42E-07 & 1.01E-02 & 1.43E-07 & 1.04E-02 & 1.03E-07 \\\hline
    ACTIVSg2000 &2.14E-01   &5.55E-08   &1.09E-01   &9.87E-09   &5.26E-02   &1.93E-08   &4.10E-02  &1.03E-08  \\\hline
    2383wp     &1.94E-01  &3.65E-08  &1.77E-01  &3.11E-08  &2.52E-03  &1.28E-08  &7.46E-03  &4.29E-08 \\\hline
    2736sp     &2.10E-02  &9.68E-09  &3.22E-10  &2.57E-08  &2.85E-08  &9.64E-09  &3.58E-04  &5.74E-10 \\\hline
    2737sop     &3.32E-02 &5.12E-08  &1.45E-02  &3.75E-08  &4.56E-08  &2.89E-08  &1.65E-03  &2.35E-08 \\\hline
    2869pegase & 8.47E-03 & 7.12E-06 & 9.32E-03 & 1.38E-07 & 6.72E-03 & 6.56E-08 & 1.47E-02 & 6.99E-08  \\\hline
    3012wp & 1.09E-01 & 4.21E-06 & 2.35E-02 & 4.64E-09 & 1.69E-03 & 3.78E-11 & 4.75E-04 & 3.08E-11 \\\hline
    {3120sp} &1.01E+01  &2.85E-09  &7.48E+00  &1.72E-11  &3.56E+00  &2.72E-11  &1.28E+00  &5.18E-11  \\\hline
    {3375wp} &7.22E+00  &5.06E-10  &5.40E+00  &6.82E-8  &3.12E+00  &2.43E-10  &1.39E+00  &1.05E-8  \\\hline
    \end{tabular}}%
  \label{tab:gap_activeloss}%
\end{table*}%
\begin{table*}[!htbp]
  \centering
  \caption{Maximum Relaxation Gaps of Reactive Power Losses from the SOC-ACOPF Model}
  \resizebox{\textwidth}{!}{%
    \begin{tabular}{|c|c|c|c|c|c|c|c|c|c|c|}\hline
    Test case &{10\% Load} &{$>$10\% Load} & {20\% Load} &{$>$20\% Load} & {30\% Load} &{$>$30\% Load} &{40\% Load} &{$>$40\% Load} \\\hline
    {case4} &1.95e-10  &1.69e-9  &2.01e-10  &1.65e-9  &2.01e-10  &1.36e-9  &1.97e-10  &1.70e-9  \\\hline
    case9 &3.65E-01 & 1.16E-10 & 1.57E-01 & 3.06E-12 & 1.33E-01 & 4.70E-12 & 1.14E-01 & 1.63E-11 \\\hline
    {13feeder} &3.42E-7  &2.42E-9  &3.40E-7  &2.40e-9  &3.37E-7  &2.37E-9  &3.35E-7  &2.34E-9  \\\hline
    IEEE14 & 2.50E-01 & 8.51E-11 & 7.95E-02 & 3.92E-11 & 1.43E-01 & 1.24E-10 & 1.18E-01 & 2.33E-11 \\\hline
    case30 & 2.31E-02 & 1.78E-09 & 2.34E-02 & 2.65E-11 & 2.21E-02 & 3.66E-10 & 2.04E-02 & 3.79E-10 \\\hline
    {34feeder} &3.10E-7  &2.43E-9  &3.03E-7  &2.43E-9  &2.97E-7  &2.44E-9  &2.90E-7  &8.94E-10  \\\hline
    IEEE57 & 2.25E-01 & 2.00E-09 & 1.92E-01 & 9.65E-09 & 1.46E-01 & 3.10E-09 & 1.02E-01 & 3.37E-09  \\\hline
    IEEE118 & 2.98E+00 & 1.00E-08 & 3.08E+00 & 1.37E-08 & 2.86E+00 & 9.55E-09 & 2.84E+00 & 3.03E-09 \\\hline
    ACTIVSg200 & 2.05E-01 & 1.35E-09 & 1.99E-01 & 1.94E-09 & 2.09E-01 & 7.14E-09 & 1.06E-06 & 2.91E-09 \\\hline
    IEEE300 & 5.25E+00 & 9.33E-08 & 4.12E+00 & 1.75E-08 & 3.82E+00 & 3.26E-08 & 3.62E+00 & 6.55E-08  \\\hline
    ACTIVSg500 & 3.31E-01 & 1.41E-08 & 2.87E-01 & 4.88E-08 & 2.45E-01 & 7.27E-09 & 2.33E-01 & 4.74E-09 \\\hline
    1354pegase & 2.95E-01 & 1.46E-06 & 2.93E-01 & 3.45E-07 & 8.50E-01 & 4.77E-07 & 8.74E-01 & 2.63E-07  \\\hline
    ACTIVSg2000 &2.23E+00   &2.10E-07   &2.57E+00   &8.57E-08   &2.56E+00   &1.36E-07   &2.53E+00  &4.70E-08  \\\hline
    2383wp     &1.87E+00  &2.85E-07  &1.61E+00  &2.34E-07  &1.13E-01  &4.60E-08  &3.34E-01  &8.70E-08 \\\hline
    2736sp     &2.37E-01  &4.94E-08  &1.19E-03  &1.21E-07  &1.18E-03  &3.93E-08  &1.48E-02  &1.74E-09 \\\hline
    2737sop     &3.76E-01 &1.82E-07  &1.58E-07  &3.13E-07  &1.19E-03  &1.76E-07  &1.17E-01  &1.39E-06 \\\hline
    2869pegase & 7.81E-01 & 4.96E-05 & 5.03E-01 & 3.44E-06 & 3.79E-01 & 8.79E-07 & 2.64E+00 & 1.12E-06  \\\hline
    3012wp & 6.23E-01 & 1.01E-10 & 3.36E-01 & 1.00E-10 & 1.54E+00 & 1.16E-10 & 1.33E+00 & 9.91E-11 \\\hline
    {3120sp} &3.10E+01  &1.01E-10  &1.29E+01  &9.92E-11  &6.13E+00  &9.90E-11  &1.58E+00 &9.88e-11  \\\hline
    {3375wp} &7.52E+00  &4.72E-10  &4.67E+01  &4.67E-10  &3.12E+01  &4.56E-10  &2.75E+01  &4.45E-10  \\\hline
    \end{tabular}}%
  \label{tab:gap_reactiveloss}%
  %\vspace{-0.3cm}
\end{table*}%
\section{Numerical results and discussion}
The SOC-ACOPF model-\eqref{SOPF-P} (with the additional constraint \eqref{eq:necerecover}) and o-ACOPF model-\eqref{NOPF} are implemented and solved in the YALMIP \cite{yalmip_toolbox} toolbox for modelling and optimization in MATLAB and Julia \cite{julia_fresh} running on the 64-bit macOS operating system. A personal computer with Intel Core i9 2.9 GHz CPU and 16G RAM is deployed. The MOSEK, Gurobi, SCS solvers are used to solve the SOC-ACOPF model-\eqref{SOPF-P}. Because MOSEK, Gurobi and SCS can only solve convex models, the convexity of the SOC-ACOPF model-\eqref{SOPF-P} is numerically validated. The IPOPT solver is used to solve the nonconvex o-ACOPF model-\eqref{NOPF}.
\subsection{{Validation of Theorems} \ref{theom:relax}-\ref{theom:pd}}
We use {benchmark} IEEE {transmission} networks \{14, 57, 118, 300\}, matpower case {9, 30}, ACTIVSg \{200, 500, 2000\}, 2383wp, 2736sp, 2737sop, 3012wp, {3120sp, 3375wp,} \{1354, 2869\} pegase \cite{Fliscounakis} {and the distribution networks case4, 13feeder, 34feeder} as the test cases to {numerically} validate {the} proposed theorems {for both power transmission and distribution networks} in this Section. Power network data from MATPOWER are directly used here \cite{Zimmerman2011}. {Note the 3012wp, 3120sp and 3375wp networks are stressed test cases for the winter peak (wp) time or summer peak (sp) time.} The objective function $f$ of typical economic dispatch is quadratic i.e. $f(p_{{n}})=\sum_{n}(\alpha_n p^2_{{n}}+\beta_n p_{{n}}+\gamma_{n})$. Where $(\alpha_{n}, \beta_{n}, \gamma_{n}) \geq 0$ are the cost parameters of nodal active power generation. Note some cost parameters {of the generators} are equal to zero in the test cases. So in some low power load scenarios, the total generation cost is equal to zero {or remains the same with the increase of power loads}. {For the distribution networks, we include several distributed generators with positive cost parameters in all the networks.}
%\begin{subequations}
%\begin{align}
%f(p_{{n}})=\sum_{n}\alpha_n p^2_{{n}}+\beta_n p_{{n}} \label{eq:obj}
%\end{align}
To make sure the objective function $f$ is monotonic for $p_{{n}}$, we must ensure $p^{min}_{{n}} \geq 0$ in the o-ACOPF and SOC-ACOPF models. For large test cases with over 3000 nodes, we use Julia to implement and solve both the SOC-ACOPF model-\eqref{SOPF-P} and the o-ACOPF model-\eqref{NOPF}. {This is mainly due to the higher computational efficiency of Julia compared with YALMIP with the associated optimization solvers.}

Since we have tested our models for the base power loads (original power loads in MATPOWER) and shown in our previous work \cite{SOCACOPF_relax_fea_me} that large relaxation gaps are present for optimal solutions in low power loads scenarios, we only show the results for low power loads scenarios here. In Table \ref{tab:objective}, we list the objective solutions of the SOC-ACOPF model-\eqref{SOPF-P} and the o-ACOPF model-\eqref{NOPF} for 10\%, 20\%, 30\% and 40\% of the absolute value of base load. So, the results in Table \ref{tab:objective} validate theorems 1-5:
\begin{itemize}
\item The SOC-ACOPF model-\eqref{SOPF-P} (with the additional constraint \eqref{eq:necerecover}) is a relaxation of the o-ACOPF model-\eqref{NOPF}. This is validated since all the objective solutions of the SOC-ACOPF model-\eqref{SOPF-P} are lower than the {corresponding objective solutions of the} o-ACOPF model-\eqref{NOPF}.

\item The (parametric) optimal value functions of the SOC-ACOPF model-\eqref{SOPF-P} and the o-ACOPF model-\eqref{NOPF} are monotonic (with regarding to the power load $p_{d}$) given that the objective function $f$ is monotonic with regarding to the power generation $p_{n}$. This is validated since all the objective solutions increase when the power loads increase.
\end{itemize}
For the SOC-ACOPF model-\eqref{SOPF-P}. The maximum relaxation gaps of active and reactive power losses are defined as:
\begin{subequations}
\begin{align}
&Gap^{po,max}:= Maximum \left \{Gap^{po}_{l},\, \forall l \in \mathcal{L} \right\}\\
&Gap^{qo,max}:= Maximum \left\{ Gap^{qo}_{l},\, \forall l \in \mathcal{L} \right\}
\end{align}
\end{subequations}
We use $Gap^{po,max}$ and $Gap^{qo,max}$ to measure {the} relaxation performance of the SOC-ACOPF model-\eqref{SOPF-P} (smaller relaxation gaps mean better performance). We list the maximum relaxation gaps results $Gap^{po,max}$ and $Gap^{qo,max}$ of the SOC-ACOPF model-\eqref{SOPF-P} in Table \ref{tab:gap_activeloss} and Table \ref{tab:gap_reactiveloss}. When the relaxation is not tight, we increase the power load levels $(p_{d},q_{d})$ (but fix the optimal nodal power generation solution $p^{*}_{n}$ so the objective solutions remain the same as in Table \ref{tab:objective}) to find the tight solutions of the SOC-ACOPF model-\eqref{SOPF-P}. The power load levels are increased by taking the power loads $p_{d}, q_{d}$ as variables in the SOC-ACOPF model and using the original power load levels (10\%, 20\%, 30\% and 40\%) as the lower bounds of these variables. The maximum relaxation gaps for tight solutions are listed in the columns denoted as '$>$10\% Load', '$>$ 20\% Load', '$>$ 30\% Load' and '$>$ 40\% Load' of Table \ref{tab:gap_activeloss} and Table \ref{tab:gap_reactiveloss}. So we have actually numerically validated theorem \ref{theom:pd} by the results in Table \ref{tab:gap_activeloss} and Table \ref{tab:gap_reactiveloss}. It is worth to mention that for most branches, the relaxation gaps are very small. The maximum relaxation gap only appear for single branch in each test case i.e. there is only one branch $\tilde{l} \in \mathcal{L}$ in each test case such that $Gap^{po}_{\tilde{l}}=Gap^{po,max}$ or $Gap^{qo}_{\tilde{l}}=Gap^{qo,max}$.
%\newpage
\begin{table*}[!htbp]
  \centering
  \caption{{Objective Solution}}
    \begin{tabular}{|c|c|c|c|c|}\hline
    Test case & SOC-ACOPF & o-ACOPF &OPF Model in \cite{qc_relax_minlp} &OPF Model in \cite{Jabr2006} \\\hline
    case4 & 24.11 & 24.11 & 24.11 & 24.11 \\\hline
    case9 & 5296.69 & 5296.69 & 5296.67 & 5296.67 \\\hline
    13feeder & 0.0954 & 0.0954 & 0.0954 & 0.0954 \\\hline
    IEEE14 & 8081.55 & 8081.61 & 8075.12 & 8075.12 \\\hline
    case30 & 576.85 & 576.89 & 573.58 & 573.58 \\\hline
    34feeder & 0.0231 & 0.0231 & 0.0231 & 0.0231 \\\hline
    IEEE57 & 41735.91 & 41738.11 & 41711.01 & 41711.00 \\\hline
    IEEE118 & 129626.18 & 129660.63 & 129341.96 & 129341.94 \\\hline
    ACTIVsg200 & 27557.57 & 27557.57 & 27556.65 & 27556.64 \\\hline
    IEEE300 & 719699.91 & 719732.11 & 718686.06 & 718654.17 \\\hline
    ACTIVsg500 & 71892.37 & 71816.43 & 68673.23 & 68672.61 \\\hline
    1354pegase & 74060.13 & 74068.93 & 74015.05 & 74012.27 \\\hline
    ACTIVsg2000 & 1228772.15 & 1228892.07 & 1226346.27 & 1226328.88 \\\hline
    2383wp & 1857584.78 & 1858356.48 & 1849952.56 & 1848900.95 \\\hline
    2736sp & 1307281.14 & 1307708.23 & 1303994.89 & 1303975.55 \\\hline
    2737sop & 777545.36 & 777863.88 & 775693.24 & 775673.19 \\\hline
    2869pegase & 133990.51 & 134000.45 & 133881.91 & 133879.80 \\\hline
    3012wp & 2580154.20 & 2582184.55 & 2572123.51 & 2571508.81 \\\hline
    3120sp & 2137388.24 & 2138775.34 & 2131420.72 & 2131316.09 \\\hline
    3375wp & 7402736.38 & 7404278.88 & NA    & 7393007.21 \\\hline
    \end{tabular}%
  \label{tab:comp_obj}%
\end{table*}%
\begin{table*}[!htbp]
  \centering
  \caption{{CPU Time}}
    \begin{tabular}{|c|c|c|c|c|}\hline
    Test case & SOC-ACOPF & o-ACOPF &OPF Model in \cite{qc_relax_minlp} &OPF Model in \cite{Jabr2006} \\\hline
    case4 & 0.01  & 0.01  & 0.04  & 0.04 \\\hline
    case9 & 0.02  & 0.02  & 0.04  & 0.02 \\\hline
    13feeder & 0.02  & 0.02  & 0.06  & 0.02 \\\hline
    IEEE14 & 0.06  & 0.02  & 0.08  & 0.04 \\\hline
    case30 & 0.09  & 0.10  & 0.15  & 0.05 \\\hline
    34feeder & 0.07  & 0.03  & 0.17  & 0.07 \\\hline
    IEEE57 & 0.13  & 0.14  & 0.32  & 0.09 \\\hline
    IEEE118 & 0.40  & 0.37  & 0.92  & 0.21 \\\hline
    ACTIVsg200 & 0.60  & 0.52  & 1.19  & 0.35 \\\hline
    IEEE300 & 1.20  & 0.79  & 2.98  & 0.87 \\\hline
    ACTIVsg500 & 2.59  & 2.00  & 4.49  & 1.52 \\\hline
    1354pegase & 5.47  & 4.27  & 35.74 & 19.00 \\\hline
    ACTIVsg2000 & 86.48 & 28.72 & 40.34 & 11.43 \\\hline
    2383wp & 52.62 & 37.26 & 62.51 & 20.88 \\\hline
    2736sp & 31.76 & 28.58 & 57.67 & 16.19 \\\hline
    2737sop & 21.54 & 16.73 & 60.13 & 12.12 \\\hline
    2869pegase & 19.66 & 11.94 & 83.10 & 70.72 \\\hline
    3012wp & 42.05 & 40.09 & 63.35 & 17.83 \\\hline
    3120sp & 60.62 & 943.11 & 85.69 & 21.98 \\\hline
    3375wp & 89.19 & 38.82 & NA    & 21.40 \\\hline
   \end{tabular}%
  \label{tab:comp_time}%
\end{table*}%
\subsection{{Comparison with Other OPF Models}}
{To compare the accuracy and computational efficiency of our o-ACOPF model-}\eqref{NOPF} 
{and SOC-ACOPF model-}\eqref{SOPF-P} {with other convex OPF models, we run the numerical test for all the test cases with the original power loads. The other two convex OPF models are based on QC relaxation and SOC relaxation [11,16]. All the models and numerical tests are implemented in Julia. The optimal objective results are listed in Table} \ref{tab:comp_obj}. {The computational CPU time results are listed in Table} \ref{tab:comp_time}. 
{If the model cannot converge, we use 'NA' to denote the result. Our models converge for all the test cases. The objective solutions are very close to the other two convex OPF models. For the 3375wp test case, since it is a stressed test case representing the 2007-08 evening peak in Poland, the convex OPF model in}  [11] {cannot converge. For the computation CPU time, our SOC-ACOPF model-}\eqref{SOPF-P} {requires less computational time than the convex OPF model in} [11] {but longer time than the convex OPF model in} [16]. {Our o-ACOPF model-}\eqref{NOPF} {takes the least computational time among all the considered models. It is worth to mention that only our o-ACOPF model-}\eqref{NOPF} {and SOC-ACOPF model-}\eqref{SOPF-P} {consider the difference between the actual measurable power flow and the non-measurable power flow as explained in Section II of this paper.} 
\section{Conclusions}
By equivalently deriving the branch ampacity constraint for the power losses, we firstly remove the approximation gap of the SOC-ACOPF model-\eqref{SOPF-P} based on the branch flow formulation. We then give the analytical and numerical proofs for several important properties of the SOC-ACOPF model-\eqref{SOPF-P}. The aims of proving these properties are to improve the solution quality and to promote the applicability of the SOC-ACOPF model-\eqref{SOPF-P}. We show rigorously that the feasible region of the SOC-ACOPF model-\eqref{SOPF-P} covers the feasible region of the o-ACOPF model-\eqref{NOPF}. Regarding to the AC feasibility of the solution from the SOC-ACOPF model-\eqref{SOPF-P}, one additional necessary conic constraint to improve the AC feasibility of the solution is derived in theorem \ref{theom:recover}. We also show how to recover the global optimal solution of the o-ACOPF model-\eqref{NOPF} from the tight optimal solution of the SOC-ACOPF model-\eqref{SOPF-P} in theorem \ref{theom:global}. Based on the monotonic property of the optimal value functions defined by the SOC-ACOPF model-\eqref{SOPF-P} and the o-ACOPF model \eqref{NOPF} from theorem \ref{theom:mono}, we prove that the tight solutions of the SOC-ACOPF model-\eqref{SOPF-P} can always be obtained by allowing the increase of nodal power loads in theorem \ref{theom:pd}. This theorem shows that large power load levels actually help tighten the relaxation gap of the SOC-ACOPF model-\eqref{SOPF-P}. In other words, SOC-ACOPF model-(2) is more applicable for high power load conditions. {According to our numerical comparison with the other two convex OPF models in the literaure, our SOC-ACOPF model-(2) has similar accuracy and computational efficiency. However, only our SOC-ACOPF model-(2) derives the correct branch ampacity constraint by considering the difference between the actual measurable power flows and the nonmeasurable power flows in the transmission line $\Pi$-model. Furthermore, our o-ACOPF model-(1) has the best computational efficiency. Future research work is expected to consider more details about the power load models (such as the polynomial (ZIP) model or the exponential model) and discrete transformer taps as decision variables.}

\bibliographystyle{elsarticle-num}
\bibliography{OPF, pub_me, julia}

\begin{thebibliography}{10}
\expandafter\ifx\csname url\endcsname\relax
  \def\url#1{\texttt{#1}}\fi
\expandafter\ifx\csname urlprefix\endcsname\relax\def\urlprefix{URL }\fi
\expandafter\ifx\csname href\endcsname\relax
  \def\href#1#2{#2} \def\path#1{#1}\fi

\bibitem{carpentier1962contribution}
J.~Carpentier, Contribution to the economic dispatch problem, Bulletin of the
  French Society of Electricians 3~(8) (1962) 431--447.

\bibitem{Cain2012}
M.~B. Cain, R.~P. Oíneill, A.~Castillo, History of optimal power flow and
  formulations, Tech. rep., FERC (2012).

\bibitem{advan_opt_meth_ps}
P.~{Panciatici}, M.~C. {Campi}, S.~{Garatti}, S.~H. {Low}, D.~K. {Molzahn},
  A.~X. {Sun}, L.~{Wehenkel}, Advanced optimization methods for power systems,
  in: 2014 Power Systems Computation Conference, 2014, pp. 1--18.
\newblock \href {https://doi.org/10.1109/PSCC.2014.7038504}
  {\path{doi:10.1109/PSCC.2014.7038504}}.

\bibitem{Farivar2012_full}
M.~Farivar, S.~H. Low, {Branch Flow Model: Relaxations and Convexification:
  Part I, Part II}, IEEE Transactions on Power Systems 28~(3) (2013)
  2554--2572.
\newblock \href {https://doi.org/10.1109/TPWRS.2013.2255317}
  {\path{doi:10.1109/TPWRS.2013.2255317}}.

\bibitem{Gan2015}
L.~Gan, N.~Li, U.~Topcu, S.~H. Low, Exact convex relaxation of optimal power
  flow in radial networks, Automatic Control, IEEE Transactions on 60~(1)
  (2015) 72--87.

\bibitem{Kocuk2015}
B.~Kocuk, S.~S. Dey, X.~A. Sun, Strong socp relaxations for the optimal power
  flow problem, Operations Research 64 (2016) 1177--1196.

\bibitem{Bai2008}
X.~Bai, H.~Wei, K.~Fujisawa, Y.~Wang, Semidefinite programming for optimal
  power flow problems, International Journal of Electrical Power \& Energy
  Systems 30~(6) (2008) 383--392.

\bibitem{Lavaei2012}
J.~Lavaei, S.~H. Low, Zero duality gap in optimal power flow problem, Power
  Systems, IEEE Transactions on 27~(1) (2012) 92--107.

\bibitem{moment_relax_opf}
D.~K. {Molzahn}, I.~A. {Hiskens}, Moment-based relaxation of the optimal power
  flow problem, in: 2014 Power Systems Computation Conference, 2014, pp. 1--7.
\newblock \href {https://doi.org/10.1109/PSCC.2014.7038397}
  {\path{doi:10.1109/PSCC.2014.7038397}}.

\bibitem{sparcity_moment}
D.~K. {Molzahn}, I.~A. {Hiskens}, Sparsity-exploiting moment-based relaxations
  of the optimal power flow problem, IEEE Transactions on Power Systems 30~(6)
  (2015) 3168--3180.
\newblock \href {https://doi.org/10.1109/TPWRS.2014.2372478}
  {\path{doi:10.1109/TPWRS.2014.2372478}}.

\bibitem{qc_relax_minlp}
H.~Hijazi, C.~Coffrin, P.~V. Hentenryck, Convex quadratic relaxations for
  mixed-integer nonlinear programs in power systems, Mathematical Programming
  Computation 9~(3) (2017) 321--367.

\bibitem{Coffrin2015}
C.~Coffrin, H.~L. Hijazi, P.~V. Hentenryck, {The QC Relaxation: A Theoretical
  and Computational Study on Optimal Power Flow}, IEEE Transactions on Power
  Systems 31~(4) (2016) 3008--3018.
\newblock \href {https://doi.org/10.1109/TPWRS.2015.2463111}
  {\path{doi:10.1109/TPWRS.2015.2463111}}.

\bibitem{stren_qc_bdt_pno}
C.~Coffrin, H.~L. Hijazi, P.~Van~Hentenryck, Strengthening convex relaxations
  with bound tightening for power network optimization, in: G.~Pesant (Ed.),
  Principles and Practice of Constraint Programming, Springer International
  Publishing, Cham, 2015, pp. 39--57.

\bibitem{ttn_mcrelax_to_glob}
M.~{Bynum}, A.~{Castillo}, J.~{Watson}, C.~D. {Laird}, Tightening mccormick
  relaxations toward global solution of the acopf problem, IEEE Transactions on
  Power Systems 34~(1) (2019) 814--817.
\newblock \href {https://doi.org/10.1109/TPWRS.2018.2877099}
  {\path{doi:10.1109/TPWRS.2018.2877099}}.

\bibitem{Capa_place_barawu}
M.~E. {Baran}, F.~F. {Wu}, Optimal capacitor placement on radial distribution
  systems, IEEE Transactions on Power Delivery 4~(1) (1989) 725--734.
\newblock \href {https://doi.org/10.1109/61.19265}
  {\path{doi:10.1109/61.19265}}.

\bibitem{Jabr2006}
R.~Jabr, et~al., Radial distribution load flow using conic programming, Power
  Systems, IEEE Transactions on 21~(3) (2006) 1458--1459.

\bibitem{Christakou2015}
K.~Christakou, D.-C. Tomozei, J.-Y. L.~B. Boudec, M.~Paolone, {AC OPF in Radial
  Distribution Networks-Parts I, II}, Electric Power Systems Research 143, 150
  (2017) 438 -- 450, 24--35.

\bibitem{Exact_OPF_rad}
M.~{Nick}, R.~{Cherkaoui}, J.~L. {Boudec}, M.~{Paolone}, An exact convex
  formulation of the optimal power flow in radial distribution networks
  including transverse components, IEEE Transactions on Automatic Control
  63~(3) (2018) 682--697.
\newblock \href {https://doi.org/10.1109/TAC.2017.2722100}
  {\path{doi:10.1109/TAC.2017.2722100}}.

\bibitem{Convex_Rel_mesh}
R.~{Madani}, S.~{Sojoudi}, J.~{Lavaei}, Convex relaxation for optimal power
  flow problem: Mesh networks, IEEE Transactions on Power Systems 30~(1) (2015)
  199--211.
\newblock \href {https://doi.org/10.1109/TPWRS.2014.2322051}
  {\path{doi:10.1109/TPWRS.2014.2322051}}.

\bibitem{SDP_rob_sca}
A.~Eltved, J.~Dahl, M.~S. Andersen,
  \href{https://doi.org/10.1007/s11081-019-09427-4}{On the robustness and
  scalability of semidefinite relaxation for optimal power flow problems},
  Optimization and Engineering (Mar 2019).
\newblock \href {https://doi.org/10.1007/s11081-019-09427-4}
  {\path{doi:10.1007/s11081-019-09427-4}}.
\newline\urlprefix\url{https://doi.org/10.1007/s11081-019-09427-4}

\bibitem{eff_bound_tight_relax}
D.~{Shchetinin}, T.~A. {Tinoco De Rubira}, G.~{Hug-Glanzmann}, Efficient bound
  tightening techniques for convex relaxations of ac optimal power flow, IEEE
  Transactions on Power Systems (2019) 1--1\href
  {https://doi.org/10.1109/TPWRS.2019.2905232}
  {\path{doi:10.1109/TPWRS.2019.2905232}}.

\bibitem{construction_linear_app}
D.~{Shchetinin}, T.~T. {De Rubira}, G.~{Hug}, On the construction of linear
  approximations of line flow constraints for ac optimal power flow, IEEE
  Transactions on Power Systems 34~(2) (2019) 1182--1192.
\newblock \href {https://doi.org/10.1109/TPWRS.2018.2874173}
  {\path{doi:10.1109/TPWRS.2018.2874173}}.

\bibitem{DLMP_me}
Z.~Yuan, M.~R. Hesamzadeh, D.~Biggar, Distribution locational marginal pricing
  by convexified {ACOPF} and hierarchical dispatch, IEEE Transactions on Smart
  Grid PP~(99) (2016) 1--1.
\newblock \href {https://doi.org/10.1109/TSG.2016.2627139}
  {\path{doi:10.1109/TSG.2016.2627139}}.

\bibitem{SOCACOPF_relax_fea_me}
Z.~Yuan, M.~R. Hesamzadeh, Second-order cone {AC} optimal power flow: convex
  relaxations and feasible solutions, Journal of Modern Power Systems and Clean
  Energy 7~(2) (2019) 268--280.

\bibitem{yalmip_toolbox}
J.~L{\"{o}}fberg, Yalmip : A toolbox for modeling and optimization in matlab,
  in: In Proceedings of the CACSD Conference, Taipei, Taiwan, 2004.

\bibitem{julia_fresh}
J.~Bezanson, A.~Edelman, S.~Karpinski, V.~B. Shah, Julia: A fresh approach to
  numerical computing, SIAM Review 59~(1) (2017) 65--98.

\bibitem{Fliscounakis}
S.~Fliscounakis, P.~Panciatici, F.~Capitanescu, L.~Wehenkel, Contingency
  ranking with respect to overloads in very large power systems taking into
  account uncertainty, preventive, and corrective actions, IEEE Transactions on
  Power Systems 28~(4) (2013) 4909--4917.
\newblock \href {https://doi.org/10.1109/TPWRS.2013.2251015}
  {\path{doi:10.1109/TPWRS.2013.2251015}}.

\bibitem{Zimmerman2011}
R.~D. Zimmerman, C.~E. Murillo-S{\'a}nchez, R.~J. Thomas, Matpower:
  Steady-state operations, planning, and analysis tools for power systems
  research and education, Power Systems, IEEE Transactions on 26~(1) (2011)
  12--19.

\end{thebibliography}
\end{document}